\newcommand{\mc}{\mathscr}
\newcommand{\PP}{\mathbb{P}}
\newcommand{\QQ}{\mathbb{Q}}
\newcommand{\RR}{\mathbb{R}}
\newcommand{\CC}{\mathbb{C}}
\newcommand{\ZZ}{\mathbb{Z}}
\newcommand{\git}{/\!\!/}
\renewcommand{\L}[1]{\mc{L}_{\mathrm{#1}}}
\newcommand{\Sfc}[1]{S_{\mathrm{#1}}}
\newcommand{\Lone}{\L{V}}
\newcommand{\Rone}{\mc R(\Lone)}
\newcommand{\Sbar}{\widetilde{S}}
\newcommand{\pibar}{\widetilde{\pi}}
\newcommand{\E}{E}
\newcommand{\Q}{ Q}
\renewcommand{\i}{\mathbf{i}}
\title{Line configurations and K3 surfaces}
\author{Elias Sink}
\address{Lederle Graduate Research Tower, 1654
University of Massachusetts Amherst
710 N. Pleasant Street
Amherst, MA 01003-9305, USA}
\curraddr{Harvard University
Department of Mathematics
Science Center Room 325
1 Oxford Street
Cambridge, MA 02138
USA}
\email{esink@math.harvard.edu}
\date{\today}
\subjclass[2020]{Primary 14N20, 14J28, 14G05; Secondary 14J27}
\thanks{The author been partially supported by the NSF grant DMS-2101726 (PI Jenia Tevelev).}
\numberwithin{equation}{section}
\newtheorem{thm}{Theorem}[section]
\newtheorem{prp}[thm]{Proposition}
\newtheorem{lem}[thm]{Lemma}
\newtheorem{cor}[thm]{Corollary}
\theoremstyle{definition}
\newtheorem{dfn}[thm]{Definition}
\newtheorem{rem}[thm]{Remark}
\newtheorem{con}[thm]{Convention}
\newtheorem{ntn}[thm]{Notation}
\newtheorem{comp}[thm]{Computation}
\begin{document}

\begin{abstract}
We study the realization spaces of $10_3$ line configurations. Answering a question posed by Sturmfels in 1991, we use elliptic surface techniques to show that realizations over $\mathbb{Q}$ are dense in those over $\mathbb{R}$ for all $10_3$ configurations. We find that for exactly four of the ten configurations, the realization space admits a compactification by a K3 surface. We show that these have Picard number 20 and compute their discriminants. Finally, we use geometric invariant theory to give an elegant interpretation of these K3 surfaces as moduli spaces.
\end{abstract}

\maketitle

\section{Introduction}
A \emph{configuration} on a finite set $\mc{P}$ (called the set of \emph{points}) is a set $\mc{L}$ of subsets of $\mc{P}$ (called \emph{lines}) such that any two lines have at most one point in common. A \emph{realization} of $\mc{L}$ in $\PP^2$ over a field $k$ is a map $\mc{P}\to \PP^2(k)$ such that for all distinct $p,q,r\in \mc{P}$, their images in $\PP^2(k)$ are collinear if and only if there is a line in $\mc{L}$ containing $p,q,r$. Configurations and their realizations over various fields have been studied extensively in both theoretical and applied contexts since the late nineteenth century. Especially fascinating are so-called $n_3$ configurations, which have $n$ points and $n$ lines such that each line contains exactly three points and each point lies on three lines. All such configurations for $7\leq n\leq 12$ have been tabulated, and it is known which ones are realizable over $\QQ$ \cite{gropp}. 

The set of all realizations over $k$ of a configuration $\mc{L}$ can be identified with the $k$-points of a quasiprojective variety $V\subset (\PP^2)^n$, where $n=|\mc{P}|$. The condition that three points are collinear is expressed as the vanishing of a corresponding minor of the $3\times n$ matrix of coordinates of $(\PP^2)^n$, and $V$ is cut out by one equation or inequation of this type for each triple. However, this variety is too large; we would like to identify realizations that are related by projective transformations of $\PP^2$. This quotient space $V/\mathrm{PGL}(3)$ is called the \emph{realization space} of the configuration, denoted $\mc{R}(\mc L)$. This quotient is constructed by means of geometric invariant theory, and the choice of stability condition gives rise to various compactifications of~$\mc{R}(\mc L)$, each with its own interpretation as a moduli space of (weak) realizations of~$\mc L$ (see Section~\ref{sec:mod}).

Specializing to the case of $n_3$ configurations, we observe that the equation defining a line $\ell\in \mc L$ has degree $1$ in the coordinates of the points on $\ell$, and degree $0$ for other points. Since each point lies on exactly three lines, the sum of the multidegrees of the $n$ equations of lines in $\mc L$ is $(3,3,\dots,3)$. This is exactly opposite the multidegree of the canonical class of $(\PP^2)^n$. If the projective variety $W\supset V$ cut out by these $n$ equations were a complete intersection of codimension $n$ in $(\PP^2)^{n}$, the adjunction formula would imply that the canonical bundle of $W$ is trivial. The same argument applies in any open subset of $(\PP^2)^n$; in particular, if the semistable locus of $W$ is such a complete intersection in $(\PP^2)^n_{ss}$, then the corresponding GIT quotient will have trivial canonical bundle as well (see Lemma~\ref{lem:can}). In other words, we expect the realization spaces of $n_3$ configurations to have compactifications with ``Calabi--Yau type" geometry. Such varieties are of significant interest for their difficult arithmetic and their relevance to physics. For example, in the $10_3$ case, $\mc{R}(\mc L)$ has expected dimension $$2n-|\mc L|-\dim \mathrm{PGL}(3)=20-10-8=2,$$ so we would hope for a K3 surface. Similarly, $11_3$ configurations would give Calabi--Yau threefolds, and so on.

As a test of this philosophy, we study the realization spaces of $10_3$ configurations. It was shown by Kantor \cite{kantor} that there are precisely $10$ such configurations (up to relabeling), and Schroeter \cite{schroeter} found that all but one of them admit realizations over fields of characteristic $0$. Following Schroeter's numbering, we refer to these configurations as $\L{I},\L{II},\dots,\L{X}$. To illustrate our methods, we focus our discussion on
\begin{equation} \label{eq:lone}
\L{V}=\{124,138,179,237,259,350,456,480,678,690\},
\end{equation}
where $\mc{P}=\{0,\dots,9\}$ and $124=\{1,2,4\}$, etc. This configuration was studied by Sturmfels in \cite{sturmfels}, who gave a concrete description of its realization space using a geometric construction sequence. Our choice of this particular configuration was motivated by the question, left open by Sturmfels, of whether its realizations over $\QQ$ are dense in the realizations over $\RR$. We give a positive answer to this question.

\begin{thm}\label{thm:dense}
    The rational realizations $\mc R(\mc L)(\QQ)$ are Zariski-dense in $\mc R(\mc L)(\RR)$ for all $10_3$ configurations $\mc L$, and dense in the analytic topology for $\mc L\neq \L{X}$.
\end{thm}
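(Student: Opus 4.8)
The plan is to realize each realization space as an open subset of an elliptic surface over $\QQ$ and to deduce density from the arithmetic of elliptic fibrations. Two kinds of configuration are disposed of at once: the single non-realizable configuration is vacuous, and those whose realization space is $\QQ$-rational --- which, by the classical straightedge construction sequences, include $\L{V}$ (Sturmfels) --- carry an explicit $\QQ$-birational map to an open subset of $\mathbb A^2$; since $\QQ^2$ is dense in $\RR^2$ in both topologies and stays so after deleting a real curve, both density statements follow once one checks that the parametrization is defined over $\QQ$ and that its image exhausts the real realization space. The content of the theorem therefore lies in the remaining, non-rational configurations, whose compactifications include the four K3 surfaces.

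For each such $\mc L$ I would work with the genus-one fibration $\pi\colon\overline S\to\PP^1$ furnished by the Calabi--Yau/adjunction analysis of $\overline S$, which is defined over $\QQ$ and, after fixing a zero section defined over $\QQ$ (for instance a component of the boundary), makes the fiber $\overline S_t$ an elliptic curve $E_t/\QQ$ for every $t\in\PP^1(\QQ)$ outside a finite set. As the boundary $\overline S\setminus\mc R(\mc L)$ and the singular fibers together form a proper closed subset, it suffices to show $\overline S(\QQ)$ is dense in $\overline S(\RR)$. The decisive input is that the Mordell--Weil rank of the generic fiber $E_\eta$ over $\QQ(t)$ is at least $1$; I would establish this in each of the four cases by writing down an explicit non-torsion section defined over $\QQ(t)$ --- concretely, a distinguished one-parameter subfamily of weak realizations meeting each fiber exactly once. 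Granting this, Silverman's specialization theorem makes the specialization map $E_\eta(\overline{\QQ(t)})\to E_t(\overline\QQ)$ injective, hence $\operatorname{rank}E_t(\QQ)\ge 1$, for all but finitely many $t\in\PP^1(\QQ)$; for such $t$ the set $E_t(\QQ)$ is Zariski-dense in $E_t$, and since $\PP^1(\QQ)$ is Zariski-dense in $\PP^1$ while a proper closed subset of $\overline S$ contains only finitely many fibers of $\pi$, the union of these fibral rational loci is Zariski-dense in $\overline S$. This establishes the first assertion for every $10_3$ configuration.

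For density in the analytic topology one must locate $E_t(\QQ)$ within $E_t(\RR)$. A non-torsion rational point has irrational image in the identity component $E_t(\RR)^0\cong\RR/\ZZ$, so $E_t(\QQ)$ is dense in $E_t(\RR)^0$; hence $E_t(\QQ)$ is dense in $E_t(\RR)$ whenever the real fiber is connected, i.e.\ whenever $\Delta(t)<0$, and a continuity argument along $\pi$ (approximating arbitrary real fibers by good rational ones) then yields analytic density of $\overline S(\QQ)$ over the nonempty open locus of such $t\in\PP^1(\RR)$. Over the complementary locus, where $E_t(\RR)$ is a union of two circles, one needs a rational point on the non-identity (oval) component: it suffices that $\pi$ carry a $2$-torsion section defined over $\QQ$ whose specialization lies on the oval --- equivalently, passing through the smallest or middle real Weierstrass root --- for then translation by it takes a dense subset of $E_t(\RR)^0$ onto a dense subset of $E_t(\RR)^1$. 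For $\mc L\neq\L{X}$ one checks, case by case, that such a section (or some equivalent supply of rational points in the oval) is available, which completes the proof; for $\L{X}$ no such section exists and the rational points of the relevant real fibers remain confined to a single component, so analytic density cannot be obtained this way --- and one expects it in fact to fail.

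The main obstacle is twofold. The structural half is producing, over $\QQ$, the genus-one fibration together with a $\QQ$-section on each of the four K3 compactifications, and certifying that its generic Mordell--Weil rank is positive; this rests on the adjunction-theoretic construction of $\overline S$ plus an explicit, case-by-case hunt for a non-torsion section (note that the rank over $\QQ(t)$ can be smaller than over $\CC(t)$, so a genuinely $\QQ$-rational section is required). The subtler half is the real-component bookkeeping: following, as $t$ ranges over $\PP^1(\RR)$, which circle of $E_t(\RR)$ the rational points occupy, and exhibiting a torsion section landing in the oval --- this is exactly what separates the nine configurations for which analytic density holds from $\L{X}$.
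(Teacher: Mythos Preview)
Your taxonomy contains a substantive error: you place $\L{V}$ among the configurations with $\QQ$-rational realization space, citing Sturmfels, but this is precisely backwards. Sturmfels' construction sequence yields an \emph{irreducible} bidegree-$(2,3)$ surface in $\PP^1\times\PP^2$ (equation~\eqref{eq:sturmfels}), which has trivial canonical class and is birational to a K3 surface---not to $\mathbb{A}^2$. Indeed $\L{V}$ is the paper's principal worked example among the four elliptic cases $\L{V},\L{VIII},\L{IX},\L{X}$. The genuinely rational realization spaces occur for $\L{I}$ (Desargues: the tenth collinearity is forced, so the equation vanishes identically) and for $\L{II},\L{III},\L{VI},\L{VII}$ (where the $(2,3)$-surface is reducible and only a rational component meets the open realization locus). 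As written, your argument would leave $\L{V}$---and $\L{IX}$, whose generic fiber is isomorphic---unproved.

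On the elliptic cases, your Zariski-density route through Silverman's specialization theorem is valid but heavier than necessary. The paper simply notes that a non-torsion $s\in\mathrm{MW}(S)$ defined over $\QQ$ yields infinitely many sections $ns$, each a curve $\cong\PP^1_{\QQ}$ with dense rational points, whose union is already Zariski-dense (Proposition~\ref{prp:zariski}); no specialization machinery is needed.

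Your analytic sketch is on the right track but loose at two points. First, ``a continuity argument along $\pi$'' hides the real content: one must show that every nonempty open $W\subseteq S_{sm}(\RR)$ meets a fiber over some rational $t$ at which the chosen section has infinite order. The paper makes this precise (Lemma~\ref{lem:dense}) via the observation that $\pi$ is a submersion on a dense open, so $\pi(W)$ contains an interval and hence meets the required dense set of good rational $t$. Second, your proposed sufficient condition---a $\QQ$-rational \emph{$2$-torsion} section landing on the oval---is too narrow: for $\L{V}$ the unique $\QQ(t)$-rational $2$-torsion point is $(0,0)$, which for $t>0$ sits at the \emph{largest} real root of the Weierstrass cubic and hence on the identity component; the section that actually works has order~$4$ (Corollary~\ref{cor:dense} allows any torsion order). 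Finally, your assertion that analytic density ``in fact fails'' for $\L{X}$ oversteps what is known: the paper shows only that every $\QQ$-section meets the identity component over a certain real interval, so \emph{this method} fails, and the question is explicitly left open (Remark~\ref{rem:X}).
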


(We do not know whether analytic density holds for $\L{X}$; see Remark~\ref{rem:X}.) It turns out that for many $10_3$ configurations, the would-be K3 surfaces are either not of the expected dimension ($\L{I}$) or are reducible ($\L{II}$, $\L{III}$, $\L{IV}$, $\L{VI}$, and $\L{VII}$). For the other four, the Calabi--Yau dream is achieved.

\begin{thm}\label{thm:k3moduli} Let $\mc L$ be one of the $10_3$ configurations $\L{V}$, $\L{VIII}$, $\L{IX}$, or $\L{X}$. 
    \begin{enumerate}[label=(\roman*),ref=\thethm(\roman*)]
        \item \label{part:k3} The realization space $\mc R(\mc L)$ is isomorphic to a Zariski-open subset of an elliptic K3 surface of Picard number $20$ and discriminant $-7$, $-8$, $-7$, and $-11$, respectively.
        \item \label{part:moduli} This K3 surface is a fine moduli space for GIT-stable weak realizations of~$\mc L$.
    \end{enumerate}
\end{thm}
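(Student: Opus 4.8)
The plan is to treat the four configurations by a common method, carrying out the details for $\L{V}$ and indicating the analogous computations for $\L{VIII}$, $\L{IX}$, and $\L{X}$. First I would produce an explicit model. Using the $\mathrm{PGL}(3)$-action to fix a frame of four points of $\mc L$, no three collinear in $\mc L$, I would run a geometric construction sequence in the style of Sturmfels \cite{sturmfels}: for each configuration one can order the remaining six points so that each is either the intersection of two lines through previously placed points or a free point on one such line. This realizes $\mc R(\mc L)$ as a Zariski-open subscheme of an explicit affine surface $S^{\circ}$, in agreement with the dimension count $2n-|\mc L|-8=2$, and it distinguishes one ``free'' parameter $t$ such that for each value of $t$ the corresponding slice of $S^{\circ}$ is a genus-one curve with a section. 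Passing to the projective closure and its minimal resolution $\widetilde S$, this pencil becomes a relatively minimal elliptic fibration $\widetilde S\to\PP^1$ with a section. That $K_{\widetilde S}\cong\mathcal O_{\widetilde S}$ follows either from the complete-intersection argument behind Lemma~\ref{lem:can} (once one checks that the ten line equations cut out the semistable locus with the expected codimension in each of these four cases) or directly from the degrees of the coefficients of the Weierstrass model; since $\widetilde S$ maps onto $\PP^1$ it is not an abelian surface, so it is a K3 surface.

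Next I would pin down the fibration: passing to a Weierstrass equation $y^2=x^3+A(t)x+B(t)$ and running Tate's algorithm classifies the singular fibers and their Kodaira types. The expectation, to be confirmed in each case, is that the reducible fibers contribute exactly $\sum_v(m_v-1)=18$, where $m_v$ is the number of components of the fiber over $v$; the Euler numbers of the singular fibers then sum to $24$, a convenient consistency check. By the Shioda--Tate formula $\rho(\widetilde S)=2+\sum_v(m_v-1)+\operatorname{rank}\mathrm{MW}$ together with $\rho\le 20$ for a K3, this forces $\operatorname{rank}\mathrm{MW}=0$ and $\rho(\widetilde S)=20$, so $\widetilde S$ is a singular K3 surface for which the given fibration is extremal. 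For the discriminant, since $\operatorname{rank}\mathrm{MW}=0$ the lattice $\mathrm{NS}(\widetilde S)$ contains the trivial lattice $U\oplus\bigoplus_v T_v$, where $T_v$ is the negative-definite root lattice attached to the Kodaira fiber at $v$, with index $|\mathrm{MW}_{\mathrm{tor}}|$, so
\[ \operatorname{disc}\mathrm{NS}(\widetilde S)=-\frac{\prod_v\operatorname{disc}(T_v)}{|\mathrm{MW}_{\mathrm{tor}}|^2}. \]
It remains to compute $\mathrm{MW}_{\mathrm{tor}}$, either by exhibiting torsion sections explicitly or via the constraint that a torsion section meets each reducible fiber in a prescribed component; this yields $\operatorname{disc}\mathrm{NS}(\widetilde S)=-7,-8,-7,-11$ respectively. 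Equivalently the transcendental lattices are the positive-definite even binary forms of discriminant $7,8,7,11$, so these K3 surfaces have complex multiplication by orders in $\QQ(\sqrt{-7})$, $\QQ(\sqrt{-2})$, $\QQ(\sqrt{-7})$, $\QQ(\sqrt{-11})$.

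For part~\ref{part:moduli} I would choose the GIT linearization for which $W^{ss}=W^s$ and show that every semistable weak realization of $\mc L$ has trivial stabilizer in $\mathrm{PGL}(3)$: a nontrivial stabilizer would force all but at most one of the ten points onto a common line, which the numerical criterion rules out once one lists the degenerate configurations surviving in $W$. Consequently $W^s\to M:=W^s\git\mathrm{PGL}(3)$ is a geometric quotient by a free action, hence a $\mathrm{PGL}(3)$-torsor, and the tautological family of ten marked points in $\PP^2$ over $W^s$ descends to a universal family over the projective surface $M$; so $M$ is a fine moduli space for GIT-stable weak realizations. Finally one identifies $M$ with $\widetilde S$: $M$ is normal projective with $K_M\cong\mathcal O_M$ (Lemma~\ref{lem:can}) and is smooth (equivalently $W^s$ is smooth along the stable locus, the action being free), hence a K3 surface birational to $\widetilde S$ and therefore equal to it, and the open immersion $\mc R(\mc L)\hookrightarrow\widetilde S$ of part~\ref{part:k3} is the inclusion of the locus of honest realizations.

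The main obstacle is twofold. On the computational side, the delicate point is the simultaneous determination of all singular fibers and of $\mathrm{MW}_{\mathrm{tor}}$: the discriminant is very sensitive to a missed or misidentified fiber, so this requires genuine care, and it is worth cross-checking against the known classification of extremal elliptic K3 surfaces. On the conceptual side, the subtle step is the stability claim --- proving that \emph{every} semistable weak realization is stable with trivial stabilizer --- since it demands a complete description of the degenerate configurations appearing on the boundary of $\mc R(\mc L)$ for the chosen linearization, together with the verification that $W^s$ is smooth there.
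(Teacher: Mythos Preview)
Your overall architecture is close to the paper's, but the heart of your plan for part~\ref{part:k3} rests on a wrong expectation that would derail the computation. You anticipate $\sum_v(m_v-1)=18$ and hence $\operatorname{rank}\mathrm{MW}=0$, making each fibration extremal. In fact, for all four surfaces the reducible fibers contribute only $17$: for $\widetilde{\Sfc{V}}\cong\widetilde{\Sfc{IX}}$ the fibers are $2\mathrm{I}_1,\,2\mathrm{I}_1^*,\,\mathrm{I}_8$ (giving $0+0+5+5+7=17$), for $\widetilde{\Sfc{VIII}}$ they are $\mathrm{I}_0^*,\,2\mathrm{I}_2^*,\,\mathrm{I}_2$ ($4+6+6+1=17$), and for $\widetilde{\Sfc{X}}$ they are $3\mathrm{I}_1,\,\mathrm{I}_2,\,\mathrm{I}_5,\,\mathrm{I}_6,\,\mathrm{I}_8$ ($0+0+0+1+4+5+7=17$). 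So Shioda--Tate alone yields only $\rho\ge 19$, and you would be unable to conclude $\rho=20$ without an additional ingredient. The paper supplies that ingredient by exhibiting an explicit \emph{non-torsion} section $s$ in each case, which forces $\operatorname{rank}\mathrm{MW}\ge 1$ and hence $\rho=20$ with $\operatorname{rank}\mathrm{MW}=1$ exactly.

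This error propagates to your discriminant computation. Your formula
\[
\operatorname{disc}\mathrm{NS}(\widetilde S)=-\frac{\prod_v\operatorname{disc}(T_v)}{|\mathrm{MW}_{\mathrm{tor}}|^2}
\]
is only valid when $\operatorname{rank}\mathrm{MW}=0$. With positive rank one must use the full Cox--Zucker formula, which includes the regulator $R$ (here the height $\langle s,s\rangle$ of a generator of $\mathrm{MW}/\mathrm{MW}_{\mathrm{tor}}$). The paper computes $\langle s,s\rangle=7/8,\,1,\,7/8,\,11/120$ for the four surfaces, and it is precisely the arithmetic of these heights combined with the torsion order and the $n_F$'s that produces $-7,-8,-7,-11$; the congruence $d\equiv 0,1\pmod 4$ is then used to pin down both $|\mathrm{MW}_{\mathrm{tor}}|$ and that $s$ is a generator. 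None of this is available under your ``extremal'' hypothesis, and cross-checking against tables of extremal elliptic K3s would simply tell you these surfaces are not on the list.

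For part~\ref{part:moduli} your outline is essentially the paper's: the paper chooses the democratic weighting $\delta=(1/10,\dots,1/10)$, for which $3\nmid 10$ guarantees $W^{ss}=W^s$, and freeness of the action follows from the existence of a frame in any stable arrangement. The smoothness of the quotient is not argued abstractly but is verified by a substantial computer check on the open cover $\{X_{\i,s}\}$; you correctly flag this as the subtle point.
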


In Section~\ref{sec:ell}, we observe that Sturmfels' calculation gives rise to an algebraic surface $S$ with an elliptic fibration. We use computations in its Mordell--Weil group to prove Theorem~\ref{thm:dense} for $\mc L=\L{V}$. Similar techniques are used for the other configurations. In Section~\ref{sec:k3}, we construct a K3 surface $\Sbar$ as the minimal resolution of $S$. We compute its singular fibers, Picard number, and discriminant, which identify it as the universal elliptic curve over $\Gamma_1(7)$. The other three K3 surfaces are constructed likewise, proving Theorem~\ref{part:k3}. In Section~\ref{sec:mod}, we review GIT quotients in general and for the case of $(\PP^2)^{n}\git\mathrm{PGL}(3)$. We describe the correct choice of GIT quotient for our problem and use computer algebra to prove Theorem~\ref{part:moduli}.

Throughout, we work with varieties over $\CC$, though our results (except for those on analytic density) hold over any algebraically closed field of characteristic $0$. 

\subsection*{Acknowledgements} The author thanks Jenia Tevelev for his inspiring mentorship and for suggesting this project, Aditya Khurmi for many useful discussions and help with the creation of Figure~\ref{fig:ell} below, as well as Alejandro Morales and Andreas Buttensch\"on for organizing the Summer 2023 REU program at the University of Massachusetts Amherst, during which much of this research was conducted. We make essential use of the computer algebra systems Magma \cite{Magma} and Macaulay2~\cite{M2} (see Appendix \ref{sec:comp}).
\section{Elliptic fibrations and density of rational realizations} \label{sec:ell}
Sturmfels' parameterization of $\Rone$ goes as follows (see \cite{sturmfels}*{\S 2} for details): fix points $1$, $2$, $3$, and $5$ to standard coordinates $[1:0:0]$, $[0:1:0]$, $[0:0:1]$, and $[1:1:1]$ using a projective transformation. Let $[a:b:c]$ be homogeneous coordinates for point $6$, and $[0:u:v]$ homogeneous coordinates for point $7$ along the line $\overline{23}$ (both in ``general position" to avoid unwanted collinearities). The positions for the other $4$ points are then determined, with one condition to ensure that $6,9,0$ are collinear:
\begin{equation}\label{eq:sturmfels}
    u^2a^2c-uva^2c-v^2b^3+uvb^2c+v^2ab^2-uvabc+uvac^2-u^2ac^2=0.
\end{equation}
It's clear that every realization of $\Lone$ (up to $\mathrm{PGL}(3)$) can be obtained in this manner for a unique choice of $([u:v],[a:b:c])$ satisfying \eqref{eq:sturmfels}, and that almost all choices yield such a realization. Sturmfels also gives an example of a realization of $\Lone$ over $\QQ$, so $\Rone(\QQ)$ is nonempty.

We observe that equation \eqref{eq:sturmfels} is irreducible and homogeneous of bidegree $(2,3)$ in $[u:v]$ and $[a:b:c]$, so it defines an irreducible surface $S\subset\PP^1\times \PP^2$. The above analysis shows that $\Rone$ is isomorphic (over $\QQ$) to a dense open subset of $S$. Moreover, Computation~\ref{comp:singularpoints} shows that this subset is contained in the smooth locus $S_{sm}$ of $S$. Hence, to prove Theorem~\ref{thm:dense} for $\mc L=\L{V}$, it suffices to show that $S_{sm}(\QQ)$ is analytically dense in $S_{sm}(\RR)$.

The generic fiber $\E$ of the projection $\pi:S\to \PP^1$ onto the first factor is a smooth cubic plane curve over the function field $K=k(\PP^1)$ whose $K$-points are identified with the sections of $\pi$. That is, $\pi$ gives an \emph{elliptic fibration} of $S$. The point $([u:v],[1:1:1])$ lies in $S$ for all $u:v\in\PP^1$, so we have a section $$o=[1:1:1]\in\E(K)$$ defined over $\QQ$. Choosing $o$ for the identity makes $\E$ into an elliptic curve over $K$. 

The abelian group $\mathrm{MW}(S)=\E(K)$ of sections of $\pi$ is called the (geometric) Mordell--Weil group of $S$. This group is finitely generated \cite{schuttshioda}*{Theorem 6.1} and amenable to computer calculations, which will allow us to show the existence of many rational points. As an easy demonstration of this technique, we prove the following:

\begin{prp} \label{prp:zariski}
$S(\QQ)$ is dense in $S$ with respect to the Zariski topology.
\end{prp}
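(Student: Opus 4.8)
The plan is to use the elliptic fibration $\pi\colon S\to\PP^1$ together with the computability of Mordell--Weil groups. The first step is a soft reduction: $S(\QQ)$ is Zariski-dense in $S$ as soon as infinitely many fibers of $\pi$ over points of $\PP^1(\QQ)$ carry infinitely many $\QQ$-rational points. Indeed, a proper closed subset $Z\subsetneq S$ is a finite union of irreducible curves (together with finitely many points); a vertical component lies in a single fiber of $\pi$, while a horizontal component $C$ has $\pi|_C$ finite, say of degree $d_C$, so away from a fixed finite subset of $\PP^1$ every fiber meets $Z$ in at most $\sum_C d_C$ points. Hence a fiber with infinitely many $\QQ$-points that avoids this finite subset must contain a $\QQ$-point off $Z$; with infinitely many such fibers, no proper closed $Z$ can contain $S(\QQ)$.

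It remains to produce infinitely many fibers of positive rank over $\QQ$. The most direct route --- finding a section of infinite order defined over $\QQ$ and taking its multiples --- is likely unavailable here, since the minimal resolution of $S$ is the K3 surface of Section~\ref{sec:k3} and one expects its geometric Mordell--Weil group (which a Magma computation should confirm) to be finite. Instead I would look, using Magma~\cite{Magma}, for an irreducible multisection $D\subset S$ of $\pi$, defined over $\QQ$, of geometric genus at most $1$ and carrying infinitely many $\QQ$-points --- either a rational curve (for instance, the image of a $(-2)$-curve of fiber-degree $\ge 2$ on the resolution) or a positive-rank elliptic curve over $\QQ$ (for instance, a fiber of a second elliptic fibration of that K3 surface) --- chosen so that the point it determines in the generic fiber of $\pi$ is not torsion, which rules out only finitely many candidates. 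Given such a $D$, the finite map $\pi|_D$ spreads the infinite set $D(\QQ)$ over infinitely many $t\in\PP^1(\QQ)$, and the specialization theorem (see \cite{schuttshioda}), applied over the function field of $D$, shows that the determined point specializes to a point of infinite order in $E_t(\QQ)$ for all but finitely many such $t$. This supplies the infinitely many fat fibers required by the reduction. (Alternatively, one could seek a $\QQ$-rational base change $\PP^1\to\PP^1$ whose pullback of $\pi$ has positive Mordell--Weil rank and take multiples of a section of infinite order; the computational effort is comparable.)

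The crux is the middle step: exhibiting the multisection $D$ and proving that it has the required $\QQ$-points. On heuristic grounds this should be possible --- fibers of positive rank are abundant in such a family --- and the role of the computation is precisely to turn ``abundant'' into ``provably infinitely many.'' The genus bound must be respected: a multisection of geometric genus $\ge 2$ has only finitely many $\QQ$-points by Faltings and is useless here. Everything else --- the reduction to fat fibers and the specialization argument --- is routine.
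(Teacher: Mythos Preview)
Your reduction in the first paragraph is fine, but the proposal goes astray at the decisive step. You assert that ``finding a section of infinite order defined over $\QQ$\dots is likely unavailable here'' because you expect the geometric Mordell--Weil group to be finite. This expectation is wrong: the paper computes $\mathrm{MW}(\Sbar)\cong\ZZ\oplus\ZZ/4$ (Lemmas~\ref{lem:picard} and~\ref{lem:disc}, Table~\ref{tab:fib}), so the rank is $1$. Concretely, the paper exhibits the section $s=[1:t:t]\in E(K)$ defined over $\QQ$ and verifies in Computation~\ref{comp:mordellweil} that it has infinite order. Its multiples $ns$ then give infinitely many sections, each a copy of $\PP^1_{\QQ}$ inside $S$, and Zariski density follows immediately---no multisections, no specialization theorem, no Faltings.

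What remains of your proposal after this correction is a strategy (low-genus multisection plus Silverman specialization) that is reasonable in principle but is not actually carried out: you concede that ``the crux is the middle step'' and only claim that a suitable $D$ ``should be possible'' to find. That is a plan, not a proof. Had you run the Magma computation you yourself suggest before dismissing the direct route, you would have found the non-torsion section $s$ and been done in two lines.
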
  
\begin{proof}
We find another section $$s=[1:t:t]\in E(K)$$ defined over $\QQ$, where $t=u/v\in K$. We check in Computation~\ref{comp:mordellweil} that $s$ is not torsion in $\mathrm{MW}(S)$, so $\pi$ admits infinitely many sections defined over $\QQ$. That is, $S$ contains an infinite collection of irreducible curves isomorphic to $\PP^1$ over $\QQ$. Their union is certainly Zariski-dense; a proper closed subset of $S$ must have codimension at least $1$ by irreducibility, and therefore can contain only finitely many irreducible curves. Since each section has a dense set of $\QQ$-points, this proves that $S(\QQ)$ is Zariski-dense.
\end{proof}

The proof of analytic density proceeds along similar lines. 

\begin{con}
For the remainder of this section only, we use the analytic topology when working with real loci. In particular, ``dense", ``open", and ``connected" are understood with respect to this topology.
\end{con}

\begin{lem}\label{lem:orbits}
Let $E$ be an elliptic curve over $\RR$, and let $p$ be an non-torsion $\RR$-point. Then the orbit of $p$ is dense in $E(\RR)$ if and only if either $E(\RR)$ is connected, or $p$ does not lie in the identity component of $E(\RR)$. 
\end{lem}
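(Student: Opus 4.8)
The plan is to reduce the statement to the topological structure of $E(\RR)$ as a compact abelian real Lie group of dimension one, together with the classical fact that a subgroup of the circle group $\RR/\ZZ$ is dense if and only if it is infinite. (Equivalently, the closure of any subgroup of $\RR/\ZZ$ is a closed subgroup, and the closed subgroups are precisely the finite cyclic groups together with the whole circle.) First I would record the standard fact that $E(\RR)$ is isomorphic as a topological group either to $\RR/\ZZ$, when the Weierstrass cubic of $E$ has a single real root, or to $\RR/\ZZ\times\ZZ/2\ZZ$, when it has three; in either case the identity component $E(\RR)^{0}$ is a closed subgroup isomorphic to $\RR/\ZZ$, of index $1$ or $2$. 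Here ``orbit of $p$'' means the cyclic subgroup $\langle p\rangle=\ZZ p$, and ``non-torsion'' means that this subgroup is infinite.

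Then I would argue by cases. If $E(\RR)$ is connected, then $E(\RR)=E(\RR)^{0}\cong\RR/\ZZ$, and since $p$ is non-torsion the subgroup $\langle p\rangle$ is infinite, hence dense by the circle fact; this proves density whenever the first alternative holds. If $E(\RR)$ is disconnected and $p\in E(\RR)^{0}$, then $\langle p\rangle\subseteq E(\RR)^{0}$, a proper closed subset of $E(\RR)$, so $\overline{\langle p\rangle}\subseteq E(\RR)^{0}\subsetneq E(\RR)$ and the orbit is not dense; this is the converse. Finally, suppose $E(\RR)$ is disconnected and $p\notin E(\RR)^{0}$. Since $E(\RR)/E(\RR)^{0}\cong\ZZ/2\ZZ$ we have $2p\in E(\RR)^{0}$, and $2p$ is again non-torsion, so $\overline{\langle 2p\rangle}=E(\RR)^{0}$ by the circle fact applied inside $E(\RR)^{0}$. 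As $p\notin\langle 2p\rangle$, the set $\langle p\rangle$ is the union of the two cosets $\langle 2p\rangle$ and $p+\langle 2p\rangle$; since translation by $p$ is a homeomorphism of $E(\RR)$, taking closures gives $\overline{\langle p\rangle}\supseteq E(\RR)^{0}\cup\bigl(p+E(\RR)^{0}\bigr)=E(\RR)$, so the orbit is dense. This exhausts the cases and yields the claimed equivalence.

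I do not expect a serious obstacle: the substantive inputs are the classification of $E(\RR)$ into one or two connected components and the description of closed subgroups of the circle, both entirely standard and citable. The only place calling for a little care is the disconnected case, where one cannot deduce density directly from the fact that $\langle p\rangle$ is infinite --- a torsion-free cyclic subgroup may still be contained in the identity component --- so one must pass to $2p$, note that its orbit fills out $E(\RR)^{0}$, and then observe that the two cosets of $\langle 2p\rangle$ inside $\langle p\rangle$ meet both connected components.
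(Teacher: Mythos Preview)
Your proof is correct and follows essentially the same approach as the paper: both reduce to the structure of $E(\RR)$ as a compact one-dimensional Lie group with identity component $\cong\RR/\ZZ$, invoke density of infinite cyclic subgroups of the circle, and in the disconnected case pass to $2p\in E(\RR)^{0}$ and then translate by $p$ to cover the other component.
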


\begin{proof}
The real locus of an elliptic curve has either one or two connected components, both diffeomorphic to circles. Being a compact connected real Lie group of dimension $1$, the identity component $E_0$ of $E(\RR)$ is a normal subgroup isomorphic (as a Lie group) to the circle group $\RR/\ZZ$. If $\E(\RR)=E_0$, then the claim follows from the standard fact that irrational rotations have dense orbits in $\RR/\ZZ$. 

Otherwise, $E(\RR)$ has two components, $E_0$ and $E_1$, with the quotient $E(\RR)/E_0\cong \ZZ/2$. If $p\in E_0$, then its orbit is contained in $E_0$ and not dense in $E(\RR)$. Otherwise, we have $E_1=p+E_0$ and $2p\in E_0$. The orbit of $2p$ is dense in $E_0$ by the above, and its image under translation by $p$ is dense in $E_1$.
\end{proof}

\begin{lem} \label{lem:dense}
Suppose $f: X\to \PP^1$ is an elliptic fibration over $\QQ$ with identity section $o$. Let $F(t)$ denote the real locus of the fiber $f^{-1}(t)$ over $t\in\PP^1(\RR)$. Suppose further that there are open sets $U_i\subseteq \PP^1(\RR)$ and sections $s_i\in \mathrm{MW}(X)$ defined over $\QQ$ such that
\begin{enumerate}[label=(\arabic*),ref=(\arabic*)]
    \item \label{part:union}  $\bigcup_i U_i$ is dense in $\PP^1(\RR)$, 
    \item \label{part:order} the $s_i$ are not torsion in $\mathrm{MW}(X)$, and
    \item \label{part:comp} for all $t\in U_i$ with $F(t)$ smooth, either $F(t)$ is connected or $s_i(t)$ lies in the non-identity component of $F(t)$. 
\end{enumerate}
Then $X_{sm}(\QQ)$ is dense in $X_{sm}(\RR)$, where $X_{sm}$ is the smooth locus of $X$.
\end{lem}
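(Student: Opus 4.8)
The plan is to reduce to the case of a point lying on a smooth fibre, and then apply Lemma~\ref{lem:orbits} fibrewise after perturbing to a nearby \emph{rational} fibre. First I would discard the bad locus: let $D\subseteq\PP^1(\RR)$ be the finite set of $t$ over which the fibre $f^{-1}(t)$ is singular. Its preimage $f^{-1}(D)(\RR)$ is a closed subset of real dimension at most $1$ inside the smooth surface $X_{sm}(\RR)$, hence nowhere dense, so it is enough to approximate by points of $X_{sm}(\QQ)$ those $x$ with $t := f(x)\notin D$. Note also that $f^{-1}(\PP^1(\RR)\setminus D)\subseteq X_{sm}$, since a flat morphism with smooth fibres over a smooth base has smooth total space.

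Next I would set up the local topology near $x$. Away from $D$ the map $f$ is a submersion: at a real point of a smooth fibre, $\ker df$ contains the one-dimensional tangent line to the fibre, so $df$ is onto the tangent space of $\PP^1(\RR)$. Since $X(\RR)$ is compact, $f$ is proper, so by Ehresmann's theorem $f$ restricts to a locally trivial fibre bundle over $\PP^1(\RR)\setminus D$. Fixing a trivialization $f^{-1}(V)\cong V\times F(t)$ over a small connected neighbourhood $V$ of $t$ and writing $x\leftrightarrow(t,y)$, I can transport $x$ into any nearby fibre $F(t')$ by setting $x':=(t',y)$; then $x'\to x$ as $t'\to t$.

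Then I would choose the perturbed fibre with care. By hypothesis~\ref{part:union} there is a point of some $U_i$ arbitrarily close to $t$; fixing such an $i$ and using that $U_i$ is open and $\QQ$ is dense, pick $t'\in U_i\cap V\cap\PP^1(\QQ)$ close to $t$. I claim $s_i(t')$ may be taken non-torsion in the elliptic curve $E_{t'} := f^{-1}(t')$: for each fixed $N$ the locus $\{N\,s_i = o\}$ is a proper Zariski-closed, hence finite, subset of $\PP^1$ by hypothesis~\ref{part:order}, and by Mazur's theorem the order of a rational torsion point of an elliptic curve over $\QQ$ is bounded by a constant independent of the curve; so only finitely many $t'\in\PP^1(\QQ)$ must be excluded (one may alternatively invoke Silverman's specialization theorem here). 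Now $F(t')$ is smooth, so hypothesis~\ref{part:comp} together with Lemma~\ref{lem:orbits} shows that the cyclic subgroup generated by $s_i(t')$ is dense in $F(t')=E_{t'}(\RR)$. Choosing a multiple $n\,s_i(t')$ within $\varepsilon/2$ of $x'$, and $t'$ close enough that $x'$ is within $\varepsilon/2$ of $x$, gives a point of $E_{t'}(\QQ)\subseteq X_{sm}(\QQ)$ within $\varepsilon$ of $x$, as required.

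The step I expect to be most delicate is the non-torsion specialization, since a non-torsion section of an elliptic fibration can a priori degenerate to a torsion point at infinitely many places; controlling this is exactly what Mazur's (or Silverman's) theorem provides. Everything else is a routine combination of Ehresmann's theorem, the description of $E(\RR)$ recorded in Lemma~\ref{lem:orbits}, and density of the rationals, so I would keep those parts brief.
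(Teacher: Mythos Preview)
Your argument is correct and follows essentially the same route as the paper: both proofs use Mazur's theorem to exclude finitely many rational base points where $s_i$ specializes to torsion, then apply Lemma~\ref{lem:orbits} fiberwise over a rational $t'$ near the target. The only cosmetic difference is that you invoke Ehresmann's theorem to transport a given point into a nearby fibre, whereas the paper argues directly that $f$ is a submersion on a dense open set, so the image of any open $W\subseteq X_{sm}(\RR)$ contains an open interval and hence meets the dense set $D_i=(U_i\cap\PP^1(\QQ))\setminus\{\text{torsion specializations}\}$.
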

\begin{proof}
When $F(t)$ is smooth, we regard it as a real elliptic curve with identity $o(t)$. As a curve in $X$, any positive multiple of $s_i$ in $\mathrm{MW}(X)$ intersects the identity section in finitely many points, so the set 
$$T_{i,n}=\{t\in \PP^1(\QQ)\mid s_i(t) \text{ has order } n \text{ in } F(t)\}$$ is finite. By Mazur's classification of torsion subgroups of elliptic curves over $\QQ$ \cite{mazur}, $T_{i,n}$ can only be nonempty if $n\leq 12$, so $T_i=\bigcup_{n>0} T_{i,n}$ is finite. It follows that the set $$D_i=(U_i\cap \PP^1(\QQ))\smallsetminus T_i,$$ where $s_i(t)$ is not torsion in $F(t)$, is dense in $U_i$. By assumption~\ref{part:comp} and Lemma~\ref{lem:orbits}, the orbit of $s_i(t)$ (which consists of rational points) is dense in $F(t)$ for all $t\in D_i$. 

Suppose $W\subseteq X_{sm}(\RR)$ is open and nonempty. $X_{sm}(\RR)$ is a real manifold of dimension $2$, so nonempty Zariski-open sets are (analytically) dense. In particular, the differential $\mathrm{d}f$ is nonzero on such a subset, so $f$ is a submersion on some nonempty open subset of $W$. Submersions are open maps, so $f(W)\subseteq \PP^1(\RR)$ contains an open set. By~\ref{part:union}, $f(W)$ intersects some $U_i$, and hence some $D_i$. Then $W$ meets meets $F(t)$ for some $t\in D_i$. $F(t)$ has dense rational points, so the proof is complete.
\end{proof}

\begin{cor} \label{cor:dense}
With notation as above, suppose there exist $s,r\in \mathrm{MW}(X)$ defined over $\QQ$ such that $s$ is non-torsion, $r$ is torsion, and for all $t\in \PP^1(\RR)$ with $F(t)$ smooth and disconnected, $r(t)$ lies in the non-identity component of $F(t)$. Then $X_{sm}(\QQ)$ is dense in $X_{sm}(\RR)$.
\end{cor}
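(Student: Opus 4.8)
The plan is to deduce Corollary~\ref{cor:dense} from Lemma~\ref{lem:dense} by producing two open sets and two sections satisfying its hypotheses. Let $\mathcal U\subseteq\PP^1(\RR)$ be the set of $t$ for which the fiber $f^{-1}(t)$ is a smooth curve; this is the complement of the finitely many real points of the discriminant, hence open and dense. Because $f$ carries an identity section $o$, the real fiber $F(t)$ is nonempty for every $t\in\mathcal U$, so it is a disjoint union of one or two circles, and when it has two components we may write $F(t)=F_0(t)\sqcup F_1(t)$ with $F_0(t)$ the identity component (cf.\ the proof of Lemma~\ref{lem:orbits}).

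First I would partition $\mathcal U$ according to where the section $s$ lands. Over each connected component of $\mathcal U$ the family of smooth real fibers is a locally trivial bundle (Ehresmann), so the number of components of $F(t)$ is locally constant on $\mathcal U$, and near each point both $o$ and $s$ become continuous maps into a fixed disjoint union of circles; a connected domain forces such a map into a single circle. Hence the condition ``$F(t)$ is disconnected and $s(t)\in F_0(t)$'' is open and closed in $\mathcal U$, so
$$U_1=\{t\in\mathcal U\mid F(t)\text{ disconnected},\ s(t)\in F_0(t)\}$$
is open, and so is its complement $U_0=\mathcal U\smallsetminus U_1$, which consists of the $t$ with $F(t)$ connected together with those where $F(t)$ is disconnected and $s(t)$ lies in the non-identity component. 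I expect this openness statement---that membership of $s(t)$ in the identity component varies locally constantly over the smooth-fiber locus---to be the only nonformal point in the argument, and it is exactly the local triviality just invoked.

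With $s_0=s$ and $s_1=s+r$ (both defined over $\QQ$), I would then check the three hypotheses of Lemma~\ref{lem:dense}. Hypothesis~\ref{part:union} holds since $U_0\cup U_1=\mathcal U$ is dense. For~\ref{part:order}, $s$ is non-torsion by assumption and $s+r$ is non-torsion because $s=(s+r)-r$ would otherwise be a difference of torsion sections. For~\ref{part:comp} over $U_0$: if $F(t)$ is connected there is nothing to prove, and if $F(t)$ is disconnected then $s_0(t)=s(t)$ lies in the non-identity component by construction of $U_0$. For~\ref{part:comp} over $U_1$: there $F(t)$ is disconnected, $s(t)\in F_0(t)$ by definition of $U_1$, and $r(t)$ lies in the non-identity component by hypothesis of the corollary; since $F(t)/F_0(t)\cong\ZZ/2$, the sum $s_1(t)=s(t)+r(t)$ lands in the non-identity component. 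Lemma~\ref{lem:dense} now yields that $X_{sm}(\QQ)$ is dense in $X_{sm}(\RR)$.
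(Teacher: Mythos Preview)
Your proof is correct and follows the same approach as the paper: you define the same open sets $U_0,U_1$ (partitioning the smooth-fiber locus according to whether $s(t)$ lies in the identity component), take $s_0=s$ and $s_1=s+r$, and apply Lemma~\ref{lem:dense}. The paper merely asserts that $U_0$ and $U_1$ are open and leaves the verification of the lemma's hypotheses implicit, whereas you spell out the Ehresmann/local-triviality argument and check each condition explicitly.
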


\begin{proof}
Let $U=\{t\in \PP^1(\RR)\mid F(t) \text{ is smooth}\}$, which is a dense open subset of $\PP^1(\RR)$. Let $$U_0=\left\{t\in U\mid \substack{F(t)\text{ is connected or } s(t) \text{ lies in}\\ \text{the non-identity component of } F(t)}\right\}$$
and $U_1=U\smallsetminus U_0$. Both are easily seen to be open. The result follows from Lemma~\ref{lem:dense} with $s_0=s$ and $s_1=s+r$.
\end{proof}
\begin{rem}\label{rem:finite}
By continuity, it suffices to check the condition on $r(t)$ for one $t$ in each connected component of $U$. Roughly speaking, $r(t)$ cannot jump between components except when $F(t)$ degenerates. 
\end{rem}
\begin{lem} \label{lem:lv}
Theorem~\ref{thm:dense} holds for $\mc L = \L{V}$.
\end{lem}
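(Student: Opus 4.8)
The plan is to verify the hypotheses of Corollary~\ref{cor:dense} for the elliptic fibration $\pi\colon S\to\PP^1$. As observed above, $\Rone$ is isomorphic over $\QQ$ to a Zariski-dense open subset of $S$ contained in the smooth locus $S_{sm}$ (Computation~\ref{comp:singularpoints}), so it suffices to prove that $S_{sm}(\QQ)$ is analytically dense in $S_{sm}(\RR)$ (the Zariski-density claim then follows, and was in any case already established in Proposition~\ref{prp:zariski}). For the non-torsion section required by the corollary I take $s=[1:t:t]\in\mathrm{MW}(S)$, which is defined over $\QQ$ and non-torsion by Computation~\ref{comp:mordellweil}. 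The remaining task is to exhibit a torsion section $r\in\mathrm{MW}(S)$ over $\QQ$ such that $r(t)$ lies in the non-identity component of $F(t)$ for every $t\in\PP^1(\RR)$ over which $F(t)$ is smooth and disconnected. To make this condition explicit I would first locate those $t$: a smooth real plane cubic has a real flex, hence a Weierstrass model $y^2=x^3+p(t)x+q(t)$ over $\RR$, and its real locus is disconnected precisely when $4p(t)^3+27q(t)^2<0$, a polynomial sign condition in $t$. Thus $U=\{t\in\PP^1(\RR):F(t)\text{ smooth}\}$ --- the complement of the finitely many $t$ lying under the degenerate fibers, which include $[1:0]$, $[0:1]$, and $[1:1]$ --- decomposes into finitely many arcs on each of which the topological type of $F(t)$ is constant.

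Now for the torsion section. Since the component group of a real elliptic curve is killed by $2$, any section of odd order lies in the identity component of every real fiber; hence $r$ must have even order, and I look for a $2$-torsion section, that is, a $\QQ(t)$-rational root of the $2$-division polynomial of $\E$ relative to the origin $o$. Bringing \eqref{eq:sturmfels} into Weierstrass form over $\QQ(t)$ and factoring the relevant cubic should produce such an $r$, which I record in a Computation and translate back into a section of $S\subset\PP^1\times\PP^2$. (If no rational $2$-torsion section exists, the fallback is to show from the same discriminant computation that every smooth real fiber is connected, and then to apply Lemma~\ref{lem:dense} directly with the single section $s$.) By Remark~\ref{rem:finite} it then suffices to choose one rational sample point in each arc of $U$ with disconnected fiber and to check, by an explicit computation in coordinates, that $r$ lands in the non-identity component there; this is a finite verification. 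With $s$ and $r$ in hand, Corollary~\ref{cor:dense} delivers the density of $S_{sm}(\QQ)$ in $S_{sm}(\RR)$, which is Theorem~\ref{thm:dense} for $\mc L=\L{V}$.

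I expect the main obstacle to be producing the torsion section $r$: one must bring the Sturmfels cubic \eqref{eq:sturmfels} into Weierstrass form over $\QQ(t)$, find a $\QQ(t)$-rational $2$-torsion point on it, and transport it back to a section of $S$. Once $r$ is available the component condition is conceptually routine but computationally delicate, because the side of the check flips each time $t$ crosses the discriminant locus or one of the visibly reducible fibers. I anticipate using Magma or Macaulay2 both to produce $r$ and to run the finitely many component checks.
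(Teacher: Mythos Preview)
Your overall strategy---reduce to Corollary~\ref{cor:dense} by passing to a Weierstrass model, use the non-torsion section $s$, and find a torsion section $r$ landing in the non-identity component---is exactly the paper's. The paper carries it out by computing the explicit Weierstrass form $y^2=x^3+a(t)x^2+b(t)x$ with $a(t)=(8t^3-15t^2+8t)/(t-1)^2$ and $b(t)=16t^2$, showing that the disconnected fibers occur precisely for $t>0$ (there the identity component is $\{x\geq 0\}$), and exhibiting the torsion section $r=\bigl(-4t,\,4t^2/(t-1)\bigr)$, whose $x$-coordinate is negative for $t>0$.

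There is, however, a concrete obstruction to your plan as written. You commit to looking for a \emph{$2$-torsion} section, but in this Weierstrass form the $2$-torsion is $\{y=0\}$, and the only $\QQ(t)$-rational $2$-torsion point is $(0,0)$: the other two come from $x^2+a(t)x+b(t)=0$, whose discriminant $t^3(16t^2-31t+16)/(t-1)^4$ is not a square in $\QQ(t)$. For $t>0$ the two irrational roots are negative (their sum is $-a(t)<0$ and product $b(t)>0$), so $x=0$ is the largest root of the cubic and $(0,0)$ sits on the \emph{identity} component. Thus the unique rational $2$-torsion section fails the hypothesis of Corollary~\ref{cor:dense}, and your stated fallback (``every smooth real fiber is connected'') does not apply either, since the fibers over $t>0$ are genuinely disconnected.

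The fix is simply to drop the restriction to order $2$: the torsion subgroup here is $\ZZ/4$, and a generator (the paper's $r$ above) has $x$-coordinate $-4t<0$ for $t>0$, hence lies in the non-identity component on the nose---no sample-point checks needed. Once you allow order-$4$ torsion, your argument goes through verbatim.
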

\begin{proof} We first compute a generalized Weierstrass form for $\E$. Computation~\ref{comp:mordellweil} gives
\begin{equation}\label{eq:weierstrass}
    y^2=x^3+a(t)x^2+b(t)x
\end{equation}
where
$$t=u/v\in K,\quad a(t)=\frac{8t^3 - 15t^2 + 8t}{(t-1)^2},\quad b(t)=16t^2.$$
(For convenience, we use affine coordinates throughout the proof.) The surface $S'\subset \PP^1\times \PP^2$ given by equation \eqref{eq:weierstrass} in $(t,x,y)$ is birational to $S$ over $\QQ$. It therefore suffices to show that $S'_{sm}(\QQ)$ is dense in $S'_{sm}(\RR)$.

As with $S$, we have an elliptic fibration $\pi':S'\to \PP^1$. We study the real locus $F(t)$ of the fiber over $t\in U= \PP^1(\RR)\smallsetminus \{0,1,\infty\}$ as a real curve in $\RR^2$ with coordinates $(x,y)$. $F(t)$ is smooth for $t\in U$, and it has two components if and only if $x^2+a(t)x+b(t)$ has distinct real roots, i.e., 
$$a(t)^2-4b(t)=\frac{t^3}{(t-1)^4}(16t^2 - 31t + 16)>0.$$
The quadratic factor is strictly positive, so $F(t)$ has two components exactly when $t>0$. Furthermore, $a(t)$ and $b(t)$ are positive for $t>0$, so 
\begin{equation}\label{eq:idcomp}
    \left(x,\pm\sqrt{x^3+a(t)x^2+b(t)x}\right)
\end{equation} 
is a real point for any $x\geq 0$. Since $x^3+a(t)x^2+b(t)x$ has a root at $0$, the identity component of the fiber over $t>0$ is exactly \eqref{eq:idcomp} for $x\geq 0$.

We now apply Corollary~\ref{cor:dense}. We saw in the proof of Proposition~\ref{prp:zariski} that $E(K)$ has a non-torsion element $s$ defined over $\QQ$. Computation~\ref{comp:mordellweil} exhibits the torsion section
$$r = \left(-4t,\frac{4t^2}{t-1}\right),$$
defined over $\QQ$. Moreover, it lies in the non-identity component of $F(t)$ when $t>0$, so the proof is complete.
\end{proof}
\begin{proof}[Proof of Theorem~\ref{thm:dense}]
Following \cite{sturmfels}, we construct the realization spaces of the other $10_3$ configurations in the same fashion: choose four points $p_1,\dots,p_4\in \mc P$, no three on a line in $\mc L$, such that $p_1,p_2\in\ell$ for some $\ell\in\mc L$. Fix these points to $[1:0:0]$, $[0:1:0]$, $[0:0:1]$, and $[1:1:1]$. Let the third point on $\ell$ be $[u:v:0]$, and let $[a:b:c]$ be another point in general position with the first five. Nine lines suffice to determine the positions of the remaining points, and the tenth line gives a bihomogeneous equation $F_{\mc L}([u:v],[a:b:c])=0$ of bidegree $(2,3)$. Imposing open conditions to exclude additional collinearities presents the realization space as a subvariety of $\PP^1\times \PP^2$. This construction is carried out for each configuration in Computations~\ref{comp:other} and ~\ref{comp:k3}.

Two configurations are special. One is $\L{I}$, the well-known Desargues configuration. Here, the nine collinearities imply the tenth (Desargues' theorem), so $F_{\L{I}}$ is identically $0$. $\mc R(\L{I})$ is a Zariski-open subset of $\PP^1\times \PP^2$, so it has dense rational points. The other is the unique non-realizable $10_3$ configuration $\L{IV}$, for which the claim is vacuous. 

For all other configurations, the realization space is a surface. For configurations $\L{II}$, $\L{III}$, $\L{VI}$, and $\L{VII}$, the surface given by $F_{\mc L}=0$ is reducible. In each case, all but one component are eliminated by the open conditions, leaving a smooth rational surface as the realization space. Again, density is immediate. 

For the remaining configurations $\L{V}$ (treated above), $\L{VIII}$, $\L{IX}$, and $\L{X}$, the equations $F_{\mc L}=0$ define irreducible surfaces $\Sfc{V}=S$, $\Sfc{VIII}$, $\Sfc{IX}$, and $\Sfc{X}$ in $\PP^1\times \PP^2$ with elliptic fibrations given by projection to $\PP^1$. We use a series of computations, collected in Computation~\ref{comp:k3} below, to complete the proof. To prove Zariski density, it suffices to find a non-torsion element $s$ of each Mordell--Weil group (see Proposition~\ref{prp:zariski}). For analytic density, we observe that the generic fibers of $\Sfc{V}$ and $\Sfc{IX}$ are isomorphic (cf. Table~\ref{tab:fib} below), so it remains to study $\Sfc{VIII}$. We proceed as in Lemma~\ref{lem:lv}: We first check that the singularities of the surface are disjoint from the realization space, and we compute the Weierstrass form $y^2=x^3+a(t)x^2+b(t)x$ where $a(t)=4t(t-1)^2$ and $b(t)=-16t^3(t-1)^2$. In addition to the non-torsion section $s$, we have a torsion section $r$ given by $(x,y)=(0,0)$. When $1\neq t>0$, we have $b(t)<0$, so $x=0$ is the middle root of the cubic; when $t<0$, we have $b(t)>0$ and $a(t)<0$, so $x=0$ is the smallest root. Thus $r$ lies in the nonidentity component of every smooth fiber, and the claim follows from Corollary~\ref{cor:dense}.
\end{proof}
\begin{rem}\label{rem:X}
    This method does not show analytic density of $\Sfc{X}(\QQ)$, because there is an interval on the base over which \textit{every} section defined over $\QQ$ intersects the fibers in their identity component; see Computation~\ref{comp:k3} for details. 
\end{rem}
\section{The K3 surfaces}\label{sec:k3}
In this section, we study the surface $S=\Sfc{V}$ from Section~\ref{sec:ell} and its elliptic fibration $\pi$ in greater detail. While the general fiber of an elliptic fibration is a smooth curve of genus $1$, there may be finitely many points where the fiber degenerates into something singular. Our fibration $\pi$ has five such singular fibers. The fibers ${\pi^{-1}([0:1])}$, ${\pi^{-1}([1:1])}$, and ${\pi^{-1}([1:0])}$ are unions of lines in $\PP^2$, and $\pi^{-1}\left(\left[\frac{31\pm 3\sqrt{-7}}{32}:1\right]\right)$ are a conjugate pair of nodal cubics. $S$ itself has seven isolated singular points, all of which are contained in the first three singular fibers (see Computation~\ref{comp:singularpoints}). 

In the same computation, we find that these singularities are all Du Val of type $A_n$ for $n\leq 3$. Du Val singularities can be resolved by a finite sequence of blowups at isolated double points. The result is a minimal smooth surface $\Sbar$ with a birational morphism $\varphi:\Sbar\to S$. The composition $\pibar = \pi\varphi$ gives an elliptic fibration of $\Sbar$ whose singular fibers are those of $S$ with each Du Val singularity replaced by a chain of rational curves. This is depicted in Figure~\ref{fig:ell}.
\begin{figure}[ht]
    \centering
    \includegraphics[width=1\linewidth,trim=4 4 4 4,clip]{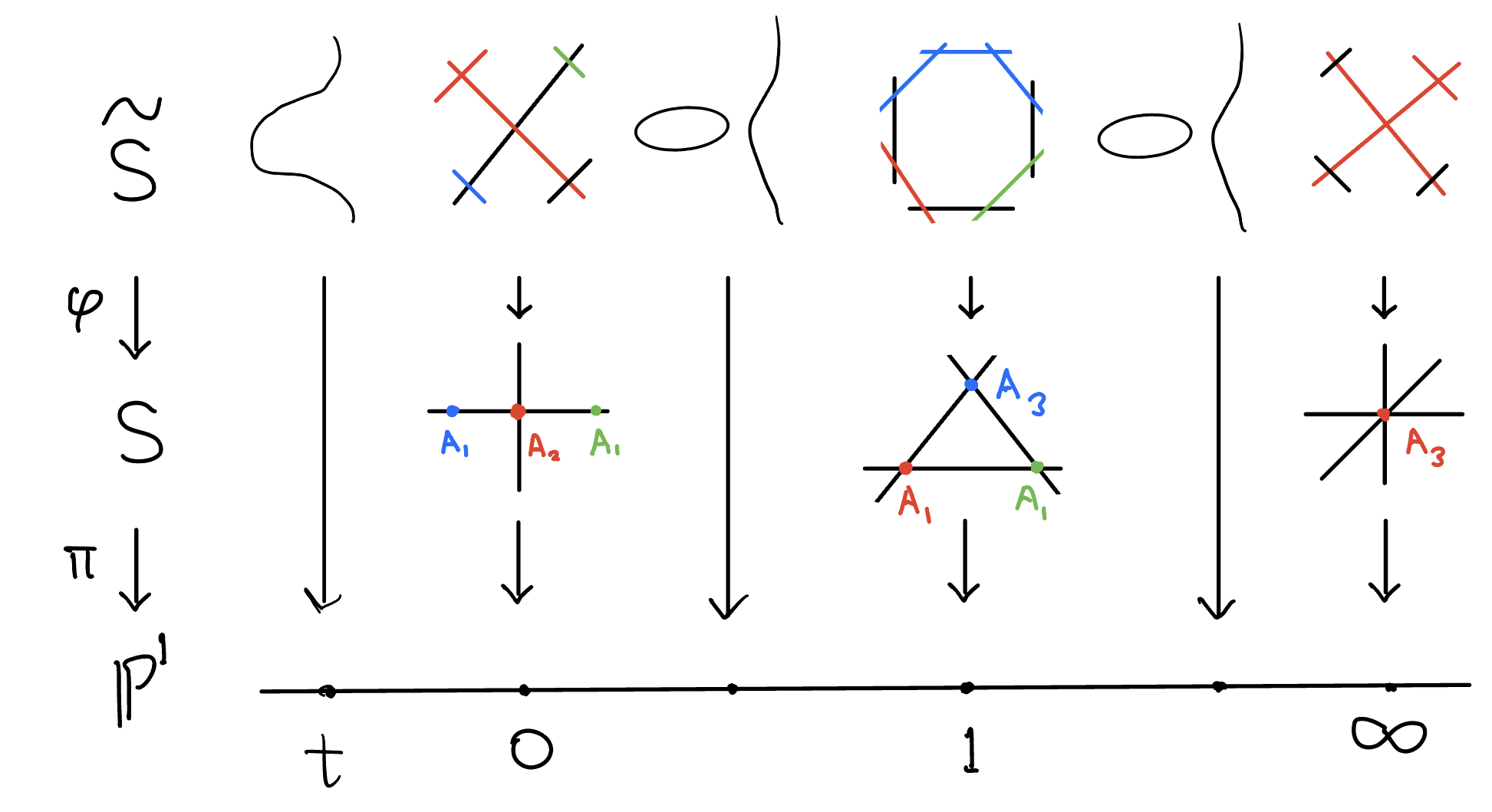}
    \caption{The elliptic fibrations of $S$ and $\Sbar$, drawn over $\RR$ with coordinate $t=u/v$ on $\PP^1$. The marked points are Du Val singularities of $S$ and are replaced by a chain of rational curves in $\Sbar$. Smooth fibers, as well as the nodal fibers (not pictured), are unchanged by $\varphi$.}
    \label{fig:ell}
\end{figure}

A smooth surface with a minimal elliptic fibration is called an \emph{elliptic surface} (see~\cite{schuttshioda}). All possible singular fibers of an elliptic surface were determined by Kodaira. In his notation, the nodal cubic fibers are of type $\mathrm{I}_1$. Inserting the appropriate trees of exceptional curves, we find that $\pibar^{-1}([0:1])$ and $\pibar^{-1}([1:0])$ are of type $\mathrm{I}_1^*$, while $\pibar^{-1}([1:1])$ is of type $\mathrm{I}_8$. (One can also compute the list of Kodaira fibers directly, which gives the same result; see Computation~\ref{comp:mordellweil}.) From this information, we obtain the topological Euler charateristic of $\Sbar(\CC)$:
$$e(\Sbar)=\sum_F e(F)=1+1+7+7+8=24$$
where the sum is over singular fibers \cite{schuttshioda}*{Theorem 6.10}. This is the correct Euler characteristic for a \emph{K3 surface}, i.e., a complete nonsingular surface $X$ with trivial canonical bundle $\omega_X\cong \mc O_X$ and irregularity $h^{1,0}(X)=h^1(X,\mc O_X)=0$.

\begin{lem}\label{lem:k3}
$\Sbar$ is a K3 surface.
\end{lem}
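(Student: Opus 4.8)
The plan is to verify the two substantive defining properties of a K3 surface for $\Sbar$: triviality of the canonical bundle, and vanishing of the irregularity. (That $\Sbar$ is a complete nonsingular surface is already clear, since it is obtained from the projective surface $S$ by a finite chain of blowups at closed points.)

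First I would establish $\omega_S\cong\mc O_S$ by adjunction. Since $S$ is cut out in the smooth variety $\PP^1\times\PP^2$ by the single bihomogeneous equation \eqref{eq:sturmfels}, it is a Cartier divisor, hence Gorenstein, with dualizing sheaf $\omega_S\cong\bigl(\omega_{\PP^1\times\PP^2}\otimes\mc O(S)\bigr)\big|_S$. The canonical bundle of $\PP^1\times\PP^2$ is $\mc O(-2,-3)$, while $\mc O(S)=\mc O(2,3)$ because \eqref{eq:sturmfels} has bidegree $(2,3)$; these cancel, so $\omega_S\cong\mc O_S$. Next, Computation~\ref{comp:singularpoints} tells us the singularities of $S$ are all Du Val (of types $A_1$ through $A_3$), and Du Val singularities are canonical. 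Hence the minimal resolution $\varphi:\Sbar\to S$ is crepant, $K_{\Sbar}=\varphi^{*}K_S$; equivalently $\omega_{\Sbar}\cong\varphi^{*}\omega_S\cong\mc O_{\Sbar}$. (Concretely, the exceptional locus of $\varphi$ is a disjoint union of $A_n$-chains of $(-2)$-curves, each of which meets $K_{\Sbar}$ trivially by adjunction, so $K_{\Sbar}$ is pulled back from a divisor class on $S$.) In particular $\Sbar$ carries no $(-1)$-curve, so it is minimal.

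For the irregularity I would use that Du Val singularities are rational, so $\varphi_{*}\mc O_{\Sbar}=\mc O_S$ and $R^{1}\varphi_{*}\mc O_{\Sbar}=0$, whence $H^{1}(\Sbar,\mc O_{\Sbar})\cong H^{1}(S,\mc O_S)$ by the Leray spectral sequence. It then remains to check $H^{1}(S,\mc O_S)=0$, which follows from the long exact cohomology sequence of
$$0\to\mc O_{\PP^1\times\PP^2}(-2,-3)\to\mc O_{\PP^1\times\PP^2}\to\mc O_S\to 0,$$
together with $H^{1}(\PP^1\times\PP^2,\mc O)=0$ and the Künneth computation $H^{2}(\PP^1\times\PP^2,\mc O(-2,-3))=H^{1}(\PP^1,\mc O(-2))\otimes H^{1}(\PP^2,\mc O(-3))=0$ (the second tensor factor vanishes). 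Thus $h^{1}(\Sbar,\mc O_{\Sbar})=0$, and combined with $\omega_{\Sbar}\cong\mc O_{\Sbar}$ this shows $\Sbar$ is a K3 surface. Alternatively, once $\omega_{\Sbar}\cong\mc O_{\Sbar}$ is known one can finish via the Enriques--Kodaira classification: $\Sbar$ is then minimal of Kodaira dimension $0$, hence K3 or abelian, and since $e(\Sbar)=24\neq 0$ it cannot be abelian.

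None of the steps is a serious obstacle; the only point requiring care is that $S$ is singular, so one must work with the dualizing sheaf rather than a genuine canonical bundle and invoke two standard facts about Du Val singularities: that they are canonical (giving crepancy of $\varphi$, hence $\omega_{\Sbar}\cong\mc O_{\Sbar}$) and that they are rational (so that resolving $S$ introduces no irregularity, $h^{1}(\Sbar,\mc O_{\Sbar})=h^{1}(S,\mc O_S)$).
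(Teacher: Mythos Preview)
Your proposal is correct and follows essentially the same route as the paper: adjunction on $S\subset\PP^1\times\PP^2$ to get $\omega_S\cong\mc O_S$, the ideal-sheaf sequence to get $h^1(S,\mc O_S)=0$, and then crepancy and rationality of Du Val singularities to transfer both facts to $\Sbar$. The only cosmetic differences are that the paper phrases the rationality step via $\mc O_S\to R\varphi_*\mc O_{\Sbar}$ being a quasi-isomorphism (your Leray argument is equivalent), and that your alternative Enriques--Kodaira finish using $e(\Sbar)=24$ is an extra remark not in the paper.
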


\begin{proof}
We first compute the canonical bundle $\omega_S$ on $S$. Recall that $S$ has bidegree $(2,3)$ in $\PP^1\times \PP^2$, while the canonical bundle $\omega_{\PP^1\times \PP^2}$ has bidegree $(-2,-3)$. $S$ is a regular in codimension $1$, so by the adjunction formula,
$$\omega_S=\left.(\omega_{\PP^1\times \PP^2}\otimes \mc O(S))\right\vert_S=\left.(\mc O(-2,-3)\otimes \mc O(2,3))\right\vert_S=\mc O_S.$$
We can also compute $h^1(S,\mc O_S)$ using the short exact sequence 
$$0\to \mathcal{O}(-2,-3) \to \mathcal{O}\to\mathcal{O}_S\to 0.$$
of sheaves on $\PP^1\times \PP^2$. The corresponding long exact sequence in cohomology is
$$\cdots \to H^1(\PP^1\times \PP^2,\mathcal{O})\to H^1(S,\mathcal{O}_{S})\to H^2(\PP^1\times \PP^2,\mathcal{O}(-2,-3))\to\cdots.$$
Since $h^{1,0}(\PP^1\times \PP^2)=h^{2,3}(\PP^1\times \PP^2)=0$, we have $h^1(S,\mc O_S)=0$ as well.

Since the singularities of $S$ are all Du Val, the resolution $\varphi$ is crepant  \cite{reid}, i.e., $$\omega_{\Sbar}=\varphi^*\omega_S=\varphi^*\mc O_S=\mc O_{\Sbar}.$$ Moreover, Du Val singularities are rational, meaning the natural map $$\mathcal{O}_S \to R\varphi_*\mathcal{O}_{\Sbar}$$ of complexes on $S$ is a quasi-isomorphism. Applying $R^1\Gamma$ yields 
$$h^1(\Sbar,\mc O_{\Sbar})=h^1(S,\mc O_S)=0,$$
so $\Sbar$ is a K3 surface. 
\end{proof}

\begin{rem} 
The resolution $\varphi$ is an isomorphism away from the singular points of $S$, which are disjoint from the open subset isomorphic to $\Rone$. It follows that $\Sbar$ also has such a subset, so it is the compactification of $\Rone$ by an elliptic K3 surface promised in Theorem~\ref{part:k3}.
\end{rem} 

We now compute enough standard invariants $\Sbar$ to determine it up to isomorphism. First is the \emph{Picard number} $\rho(\Sbar)$, the rank of the N\'eron--Severi group $\mathrm{NS}(\Sbar)$ of divisors modulo algebraic equivalence, which we compute using the Shioda--Tate formula.
\begin{lem}[\cite{schuttshioda}*{Theorem 6.3, Corollary 6.13}] \label{lem:shiodatate}
For any elliptic surface $X$ with identity section, we have $\mathrm{NS}(X)/A\cong \mathrm{MW}(X)$ where $A$ is the subgroup generated by the classes of the identity section and fiber components. Hence,
$$\rho(X)=2+\sum_F (m_F-1)+\operatorname{rank} \mathrm{MW}(X),$$
where $m_F$ is the number of components of the singular fiber $F$.
\end{lem}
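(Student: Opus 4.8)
This is the classical Shioda--Tate theorem; in the paper it is invoked by the citation \cite{schuttshioda}*{Theorem 6.3, Corollary 6.13}, and the plan below is how I would reconstruct the argument. Write $f\colon X\to C$ for the elliptic fibration (with $C=\PP^1$ in our situation), $K=k(C)$, and let $E=E_\eta$ denote the generic fiber, an elliptic curve over $K$ whose origin is the restriction of the identity section $O$. The backbone of the proof is restriction to the generic fiber: the map $\mathrm{Div}(X)\to\mathrm{Div}(E)$ sending a horizontal prime divisor to its generic point and a vertical one to $0$.

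For the group isomorphism, I would first check that this restriction map is surjective (a closed point of $E$ is the generic point of its Zariski closure in $X$), carries principal divisors to principal divisors, and hence descends to a surjection $r\colon\mathrm{Pic}(X)\to\mathrm{Pic}(E)$. Its kernel is exactly the subgroup $V$ generated by fiber components: if $r(D)=0$ then $D|_E=\mathrm{div}(g)$ for some $g\in K(X)^\times$, so $D-\mathrm{div}_X(g)$ is vertical. Next, since $E$ carries the $K$-point $o$, the degree map splits $\mathrm{Pic}(E)\cong\ZZ[o]\oplus\mathrm{Pic}^0(E)$ with $\mathrm{Pic}^0(E)\cong E(K)=\mathrm{MW}(X)$, and $[o]$ is the restriction of the class of $O$. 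Thus $\mathrm{Pic}(X)/\langle V,O\rangle\cong\mathrm{MW}(X)$, and since the fiber class $F$ already lies in $V$, the subgroup $\langle V,O\rangle$ is precisely the lattice $A$ of the statement. To descend from $\mathrm{Pic}(X)$ to $\mathrm{NS}(X)=\mathrm{Pic}(X)/\mathrm{Pic}^0(X)$, I would note that $\mathrm{Pic}^0(X)$ is a divisible group while $\mathrm{Pic}(E)\cong\ZZ\oplus E(K)$ is finitely generated (finite generation of $\mathrm{MW}(X)$, cited in the text as \cite{schuttshioda}*{Theorem 6.1}) and so has no nonzero divisible subgroup; hence $r$ kills $\mathrm{Pic}^0(X)$ and we obtain $\mathrm{NS}(X)/A\cong\mathrm{MW}(X)$. (In this paper every $\Sbar$ has $q(\Sbar)=h^1(\mc O_{\Sbar})=0$, so $\mathrm{Pic}=\mathrm{NS}$ and this last point is vacuous.)

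The Picard-number formula then follows by taking ranks: $\rho(X)=\operatorname{rank}A+\operatorname{rank}\mathrm{MW}(X)$, so everything reduces to the identity $\operatorname{rank}A=2+\sum_F(m_F-1)$. For this I would exhibit the orthogonal decomposition $A=\langle O,F\rangle\perp\bigoplus_F T_F$, where $T_F$ is the sublattice spanned by the \emph{non-identity} components of a reducible fiber $F$; here both $O$ and $F$ are orthogonal to those components, while the identity component is recovered as $C_{F,0}=F-\sum_{i\geq 1}\mu_{F,i}C_{F,i}$ (the section meets a reduced component, so $\mu_{F,0}=1$). The summand $\langle O,F\rangle$ has rank $2$ because its Gram matrix $\left(\begin{smallmatrix}O^2&O\cdot F\\ F\cdot O&F^2\end{smallmatrix}\right)=\left(\begin{smallmatrix}-\chi(\mc O_X)&1\\1&0\end{smallmatrix}\right)$ has determinant $-1$. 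By Zariski's lemma the intersection form on the components of a connected fiber is negative semidefinite with radical exactly $\QQ\cdot F$, and by Kodaira's classification their dual graph is an affine $\widetilde A$--$\widetilde D$--$\widetilde E$ diagram, so deleting the node $C_{F,0}$ leaves a negative-\emph{definite} root lattice $T_F$ of rank $m_F-1$. Summing over the finitely many singular fibers gives $\operatorname{rank}A=2+\sum_F(m_F-1)$, and hence the claimed formula.

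The part that is not formal bookkeeping — and so the main obstacle — is the computation of $\operatorname{rank}A$: it rests on Zariski's lemma (negative semidefiniteness of the intersection form on a vertical divisor, with radical spanned by the fiber) together with Kodaira's classification of degenerate fibers, which is what turns ``$m_F$ components'' into ``rank $m_F-1$'' and makes the orthogonal decomposition of $A$ go through. The restriction-to-generic-fiber analysis and the $\mathrm{Pic}$-versus-$\mathrm{NS}$ point are comparatively routine.
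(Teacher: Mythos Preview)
The paper does not prove this lemma; it is quoted directly from \cite{schuttshioda}, so there is no in-paper argument to compare against. Your reconstruction is the standard Shioda--Tate proof and is correct in every step. One small simplification: you do not need Kodaira's classification to get $\operatorname{rank}T_F=m_F-1$; Zariski's lemma already says the intersection form on the components of a fiber is negative semidefinite with radical $\QQ\cdot F$, and since $F$ involves $C_{F,0}$ with nonzero coefficient, the span of the remaining $m_F-1$ components is automatically negative definite of full rank.
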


\begin{lem}\label{lem:picard}
The Picard number $\rho(\Sbar)$ is $20$.
\end{lem}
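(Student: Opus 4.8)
The plan is to apply the Shioda--Tate formula (Lemma~\ref{lem:shiodatate}) and pin down the two quantities on its right-hand side that are not yet explicit: the total contribution $\sum_F (m_F-1)$ of the reducible fibers, and the rank of $\mathrm{MW}(\Sbar)$.

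First I would read off the fiber contributions from the Kodaira types already identified. A fiber of type $\mathrm{I}_n$ has $n$ components and a fiber of type $\mathrm{I}_n^*$ has $n+5$, so the two $\mathrm{I}_1^*$ fibers over $[0:1]$ and $[1:0]$ each contribute $m_F-1=5$, the $\mathrm{I}_8$ fiber over $[1:1]$ contributes $m_F-1=7$, and the two nodal ($\mathrm{I}_1$) fibers contribute nothing. Hence $\sum_F (m_F-1)=5+5+7=17$, and Lemma~\ref{lem:shiodatate} gives
$$\rho(\Sbar)=2+17+\operatorname{rank}\mathrm{MW}(\Sbar)=19+\operatorname{rank}\mathrm{MW}(\Sbar).$$
So it suffices to prove $\operatorname{rank}\mathrm{MW}(\Sbar)=1$. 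For the lower bound, recall from the proof of Proposition~\ref{prp:zariski} that $\mathrm{MW}(S)=\E(K)$ contains the non-torsion section $s=[1:t:t]$; since $\varphi\colon \Sbar\to S$ is an isomorphism on generic fibers, $s$ gives a non-torsion element of $\mathrm{MW}(\Sbar)$, so $\operatorname{rank}\mathrm{MW}(\Sbar)\geq 1$ and $\rho(\Sbar)\geq 20$. For the upper bound, $\Sbar$ is a K3 surface by Lemma~\ref{lem:k3}, so $h^{2,0}(\Sbar)=1$ and the Hodge structure on $H^2(\Sbar,\CC)$ has a nonzero $(2,0)$-part; by the Lefschetz theorem on $(1,1)$-classes, the algebraic classes span a subspace of $H^2$ of dimension at most $h^{1,1}(\Sbar)=20$, i.e. $\rho(\Sbar)\leq 20$. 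Combining the two bounds forces $\rho(\Sbar)=20$ (and incidentally $\operatorname{rank}\mathrm{MW}(\Sbar)=1$).

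I do not anticipate a real obstacle here: the argument is essentially bookkeeping once the Kodaira fiber types and one non-torsion section are in hand. The only point that deserves a word of care is that we obtain the Mordell--Weil rank indirectly, from the sandwich $20\leq\rho(\Sbar)\leq 20$, rather than by an explicit saturation computation; one could instead cite the direct Mordell--Weil computation in Computation~\ref{comp:mordellweil}, but the universal bound $\rho\leq 20$ for K3 surfaces makes that unnecessary.
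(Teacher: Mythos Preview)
Your proof is correct and follows essentially the same route as the paper: apply Shioda--Tate with $\sum_F(m_F-1)=17$, use the non-torsion section $s$ from Proposition~\ref{prp:zariski} to get $\rho(\Sbar)\geq 20$, and cap it off with the K3 bound $\rho\leq h^{1,1}=20$. The only cosmetic difference is that you spell out the Lefschetz $(1,1)$ argument for the upper bound, whereas the paper simply cites that $20$ is the maximal Picard number of a K3 surface.
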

\begin{proof}
In the proof of Proposition~\ref{prp:zariski}, we exhibited a non-torsion member of $\mathrm{MW}(S)\cong \mathrm{MW}(\Sbar)$, so its rank is at least $1$. By Lemma~\ref{lem:shiodatate}, 
$$\rho(\Sbar)\geq 2+(0+0+5+5+7)+1=20.$$
But $20$ is the largest possible Picard number for a K3 surface \cite{huybrechts}, so in fact $\operatorname{rank}\mathrm{MW}(\Sbar)=1$ and $\rho(\Sbar)=20$.
\end{proof}

It follows from the Torelli theorem for K3 surfaces that K3 surfaces $X$ of Picard number 20 are determined by their \emph{transcendental lattice} $T(X)$, the orthogonal complement of $\mathrm{NS}(X)$ in $H^2(X,\ZZ)$ \cites{huybrechts,schutt}. This is an even, positive-definite lattice of rank $2$. Following \cite{schutt}, we say that a K3 surface over $\QQ$ has \emph{Picard rank $20$ over $\QQ$} if $\rho(X)=20$ and $\mathrm{NS}(X)$ is generated by divisors defined over $\QQ$. Elkies showed that there are exactly $13$ such K3 surfaces, corresponding to the $13$ primitive lattices of class number $1$ \cites{elkies,schutt}. They are determined by the discriminant $d$ of $T(X)$, or equivalently (up to sign) the discriminant of $\mathrm{NS}(X)$.

\begin{lem}\label{lem:disc}
$\Sbar$ has Picard rank $20$ over $\QQ$ with discriminant $d=-7$.
\end{lem}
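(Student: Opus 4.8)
The plan is to nail down $\mathrm{NS}(\Sbar)$ completely via the elliptic fibration $\pibar$, using the data already assembled. By Lemma~\ref{lem:picard} we know $\rho(\Sbar)=20$, so what remains is (a) to compute the discriminant and (b) to check that $\mathrm{NS}(\Sbar)$ is generated by divisors defined over $\QQ$. Both will come from the Shioda--Tate description (Lemma~\ref{lem:shiodatate}) together with the Mordell--Weil computation of Section~\ref{sec:ell}.

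For the discriminant: by Lemma~\ref{lem:shiodatate}, $\mathrm{NS}(\Sbar)$ contains the trivial lattice $\mathrm{Triv}=\langle o\rangle\oplus\langle F\rangle\oplus\bigoplus_v T_v$, where $o$ is the zero section, $F$ a general fiber, and $T_v$ the negative-definite root lattice spanned by the components of the fiber over $v$ not meeting $o$, and $\mathrm{NS}(\Sbar)/\mathrm{Triv}\cong\mathrm{MW}(\Sbar)$. From the Kodaira fiber list ($\mathrm{I}_8$ over $[1:1]$, $\mathrm{I}_1^*$ over $[0:1]$ and over $[1:0]$, and two $\mathrm{I}_1$) we read off $T_v=A_7,\ D_5,\ D_5$, with discriminants $-8,\ -4,\ -4$, while $\langle o,F\rangle$ has discriminant $-1$. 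Computation~\ref{comp:mordellweil} determines $\mathrm{MW}(\Sbar)\cong\ZZ s\oplus\langle r\rangle$ — the free part has rank $1$ by Proposition~\ref{prp:zariski} and Lemma~\ref{lem:picard} — where $s=[1:t:t]$ generates the free part with Shioda height $h(s)=7/8$, and $r$ has order $4$ (indeed $2r$ is the $2$-torsion section $(x,y)=(0,0)$ of the Weierstrass model~\eqref{eq:weierstrass}). Shioda's discriminant formula (a standard consequence of Lemma~\ref{lem:shiodatate} and the height pairing; see \cite{schuttshioda}) then gives
$$\mathrm{disc}\,\mathrm{NS}(\Sbar)=\frac{\mathrm{disc}\langle o,F\rangle\cdot\prod_v\mathrm{disc}(T_v)\cdot\bigl(-h(s)\bigr)}{|\mathrm{MW}(\Sbar)_{\mathrm{tor}}|^2}=\frac{(-1)(-8)(-4)(-4)(-7/8)}{4^2}=-7.$$
The sign is forced anyway, since $\mathrm{NS}(\Sbar)$ has signature $(1,19)$; so $d=-7$. (One barely needs the full Mordell--Weil group here: $r$ already gives $|\mathrm{MW}_{\mathrm{tor}}|\geq 4$, and since $h(s)=4+2(s\cdot o)-\sum_v\mathrm{contr}_v(s)$ has denominator dividing $8$ and the displayed expression must be a negative integer, bounded contributions force $|\mathrm{MW}_{\mathrm{tor}}|=4$ and $\mathrm{disc}\,\mathrm{NS}(\Sbar)=-8h(s)$.)

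For rationality: by Lemma~\ref{lem:shiodatate}, $\mathrm{NS}(\Sbar)$ is generated by $o$, $F$, the components of the reducible fibers, and lifts of generators of $\mathrm{MW}(\Sbar)$; for the latter we take the classes of the sections $s$ and $r$, both manifestly defined over $\QQ$. The only delicate point is that Galois could a priori permute the components within a reducible fiber, shrinking the Galois-invariant part of $\mathrm{NS}(\Sbar)$; but Computation~\ref{comp:singularpoints} shows that the seven Du Val points of $S$ and the lines making up the singular fibers $\pi^{-1}([0:1])$, $\pi^{-1}([1:0])$, $\pi^{-1}([1:1])$ are all defined over $\QQ$, hence so are the exceptional curves of $\varphi$ and the strict transforms of those lines — that is, every fiber component of $\pibar$. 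Therefore $\mathrm{NS}(\Sbar)$ is generated by $\QQ$-rational divisors, so $\Sbar$ has Picard rank $20$ over $\QQ$, with discriminant $d=-7$ by the previous paragraph.

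The conceptual pieces (Shioda--Tate, Shioda's height/discriminant formula) are routine; the real work lives in the two cited computations, and the main obstacle is the Mordell--Weil computation in Computation~\ref{comp:mordellweil}: producing the Weierstrass model, reading off the Kodaira fibers, and — the essential step — determining $\mathrm{MW}(\Sbar)$ and the height of a generator of its free part. One could instead choose an explicit $\QQ$-basis of $\mathrm{NS}(\Sbar)$ among $o$, $F$, the fiber components, $s$, $r$ and evaluate a $20\times 20$ Gram determinant directly; this is the same computation in disguise, since knowing that $s$ generates $\mathrm{MW}/\mathrm{tors}$ is precisely what makes such a set a basis, and conversely had we written down divisors spanning only a finite-index sublattice, its discriminant would be $-7m^2$ with $m\geq 2$, which the computation excludes. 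Verifying $\QQ$-rationality of every fiber component in Computation~\ref{comp:singularpoints} is the other, more mechanical, obstacle.
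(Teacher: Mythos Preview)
Your approach is essentially the paper's: both feed the same fiber data, torsion, and height $\langle s,s\rangle=7/8$ into the Cox--Zucker/Shioda discriminant formula, then check that the generating divisors (sections $o,s,r$ and all fiber components) are defined over $\QQ$. One nuance: the paper does \emph{not} take Computation~\ref{comp:mordellweil} as determining the full Mordell--Weil group --- it only extracts the $\QQ$-torsion $H\cong\ZZ/4$ and $\langle s,s\rangle$, then sets $|\mathrm{MW}_{\mathrm{tors}}|=4k$ and $s=ng$ and uses integrality of $d$ (in fact $d\equiv 0,1\pmod 4$, since $T(\Sbar)$ is even of rank $2$) to force $k=n=1$; this is precisely your parenthetical remark, which is therefore the crux of the argument rather than an aside, and your main line's assertion that the computation already yields $\mathrm{MW}(\Sbar)\cong\ZZ s\oplus\ZZ/4$ is a slight over-claim.
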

\begin{proof}
We use the Cox--Zucker formula \citelist{\cite{coxzucker}\cite{huybrechts}*{\S 11.3}}:
$$|d|=\frac{R}{\left|\mathrm{MW}(\Sbar)_{\mathrm{tors}}\right|^2}\prod_F n_F.$$
Here, $\mathrm{MW}(\Sbar)_{\mathrm{tors}}$ is the torsion subgroup of $\mathrm{MW}(\Sbar)$, $n_F$ is the number of components of the singular fiber $F$ appearing with multipilicity $1$, and $R$ is the \emph{regulator}, the discriminant of $\mathrm{MW}(\Sbar)/\mathrm{MW}(\Sbar)_{\mathrm{tors}}$ with respect to the height pairing $\langle -,-\rangle$ (see \cite{shioda}).

$\mathrm{I}_1$, $\mathrm{I}_1^*$, and $\mathrm{I}_8$ fibers have $n_F=1$, $4$, and $8$, respectively. In Computation~\ref{comp:mordellweil}, we find that the subgroup $H\subseteq\mathrm{MW}(\Sbar)_{\mathrm{tors}}$ consisting of sections defined over $\QQ$ is isomorphic to $\ZZ/4$, so $|\mathrm{MW}(\Sbar)_{\mathrm{tors}}|=4k$. Since $\mathrm{MW}(\Sbar)$ has rank $1$, $R$ is equal to $\langle g,g\rangle$ for a generator $g$ of $\mathrm{MW}(\Sbar)/\mathrm{MW}(\Sbar)_{\mathrm{tors}}$. We compute $\langle s,s\rangle=7/8$ for the section $s$ from the proof of Proposition ~\ref{prp:zariski}. Writing $s=ng$ for some $n\in \ZZ$, we have $R=7/8n^2$. Putting this all together, we find that 
$$|d|=\frac{7}{8n^2}\cdot\frac{1}{(4k)^2}\cdot 1\cdot 1\cdot 4\cdot 4\cdot 8=\frac{7}{k^2n^2}.$$
Since $T(\Sbar)$ is an even lattice of rank $2$, $d$ must be an integer congruent to $0$ or $1$ modulo $4$; we deduce that $k^2=n^2=1$ and $d=-7$. In particular, ${\mathrm{MW}(\Sbar)_{\mathrm{tors}}=H}$ and $s$ generates $\mathrm{MW}(\Sbar)/\mathrm{MW}(\Sbar)_{\mathrm{tors}}$, so the Mordell--Weil group is generated by sections defined over $\QQ$. Since the identity section and all components of the reducible fibers are also defined over $\QQ$, we conclude that $\Sbar$ has Picard rank $20$ over~$\QQ$.
\end{proof}

Comparing with the table of all K3 surfaces with Picard rank $20$ over $\QQ$ in \cite{schutt}*{\S 10}, we reach the remarkable conclusion that $\Sbar$ is isomorphic to the universal elliptic curve over $\Gamma_1(7)$. As a sanity check, we find our Kodaira fibers and Mordell--Weil group among the 20 elliptic fibrations of that modular surface \cite{lecacheux}*{Table 3, row 2}, and we verify in Computation~\ref{comp:mordellweil} that the two surfaces have isomorphic generic fibers.

Having fully analyzed $\widetilde{S}=\widetilde{\Sfc{V}}$, we turn to the other three configurations of interest.

\begin{proof}[Proof of Theorem~\ref{part:k3}] 
We follow the same steps to prove that the minimal resolutions $\widetilde{\Sfc{VIII}}$, $\widetilde{\Sfc{IX}}$, and $\widetilde{\Sfc{X}}$ are also K3 surfaces of Picard number $20$. The proof that they are K3 is identical to that of Lemma~\ref{lem:k3}. Since $\Sfc{VIII}$, $\Sfc{IX}$, and $\Sfc{X}$ have degree $(2,3)$ in $\PP^1\times \PP^2$, we need only check that they have only Du Val singularities; this is done in Computation~\ref{comp:k3}. We also compute the singular fibers of each surface, and check that $\sum_F (m_F-1)=17$. Since each Mordell--Weil group has an explicit non-torsion element, Lemma~\ref{lem:shiodatate} shows that the Picard number is $20$.

\begin{table}[htb]
    \renewcommand{\arraystretch}{1.5}
    \centering
    \begin{tabular}{|c|c|c|c|c|}
        \hline
        $X$ & Singular fibers & $\mathrm{MW}(X)$ & $\langle s,s\rangle$ & $d$ \\
        \hline\hline
        $\widetilde{\Sfc{V}}\cong \widetilde{\Sfc{IX}}$ & $2\mathrm{I}_1,2\mathrm{I}^*_1,\mathrm{I}_8$ & $\ZZ\oplus \ZZ/4$ & $7/8$ & $-7$ \\
        \hline
        $\widetilde{\Sfc{VIII}}$ & $\mathrm{I}^*_0,2\mathrm{I}^*_2,\mathrm{I}_2$ & $\ZZ\oplus \ZZ/2\oplus\ZZ/2$ & $1$ & $-8$ \\
        \hline
        $\widetilde{\Sfc{X}}$ & $3\mathrm{I}_1,\mathrm{I}_2,\mathrm{I}_5,\mathrm{I}_6,\mathrm{I}_8$ & $\ZZ\oplus \ZZ/2$ & $11/120$ & $-11$ \\
        \hline
    \end{tabular}
    \bigskip
    \caption{The data required to identify each K3 surface, collected in Computation \ref{comp:k3}: the singular fibers, the Mordell-Weil group, the height pairing $\langle s,s\rangle$, and the discriminant.}
    \label{tab:fib}
\end{table}

As in Lemma~\ref{lem:disc}, we can identify these K3 surfaces $X=\widetilde{\Sfc{VIII}}$, $\widetilde{\Sfc{IX}}$, and $\widetilde{\Sfc{X}}$ up to isomorphism by computing the group $H\subseteq MW(X)_{\mathrm{tors}}$ of torsion sections defined over $\QQ$, together with the height pairing $\langle s,s\rangle$ for some non-torsion section $s$ defined over $\QQ$. This is done in Computation~\ref{comp:k3}, with the results collected in Table~\ref{tab:fib}. In each case, the Cox--Zucker formula and the requirement that $d$ be an integer congruent to $0$ or $1$ modulo $4$ imply that $H=MW(X)_{\mathrm{tors}}$ and $s$ generates $MW(X)/MW(X)_{\mathrm{tors}}$ (so $R=\langle s,s\rangle$). Hence they all have Picard rank $20$ over $\QQ$, and so are determined up to isomorphism by their discriminant.
\end{proof}

Surprisingly, we find that $\widetilde{\Sfc{V}}$ and $\widetilde{\Sfc{IX}}$ are isomorphic over $\PP^1$, as checked explicitly in Computation~\ref{comp:k3}. We do not know how to interpret this isomorphism in terms of the (nonisomorphic) configurations $\L{V}$ and $\L{IX}$. One might suspect that they are related by projective duality (exchanging points and lines), but in fact all $10_3$ configurations are self-dual. If this isomorphism does arise from some combinatorial relationship, it is more subtle than this.

\section{Moduli space interpretation}\label{sec:mod}
Recall that the K3 surface $\Sbar$ has an open subset $R$ (its ``interior") which parameterizes the line arrangements realizing the configuration $\Lone$. We would like to extend this interpretation to the complement of the interior (the ``boundary"), which ought to parameterize ``degenerate" realizations where additional triples become collinear. We make this precise using the machinery of geometric invariant theory (GIT), which we briefly review.

In general, given an action of a reductive algebraic group $G$ on a projective variety $X$, a well-behaved quotient $X/G$ does not exist in the category of varieties. GIT gives a method for constructing a projective variety $X\git G$ which is the categorical quotient of a $G$-invariant open subset $X_{ss}\subset X$, called the \emph{semistable locus}; here, being a categorical quotient means that any $G$-invariant map $X_{ss}\to Y$ factors through the canonical map $X_{ss}\to X\git G$. The subset $X_{ss}$ depends on a choice of $G$-linearized ample line bundle on $X$; different choices yield different GIT quotients in general. This choice also determines an open subset $X_s\subseteq X_{ss}$, called the \emph{stable locus}, such that the image of $X_s$ in $X\git G$ is an honest geometric quotient $X_s/G$, in the sense that the fibers of $X_s\to X_s/G$ are $G$-orbits. This is summarized in the following diagram:
$$\begin{tikzcd}
    X_s\arrow[hook,r]\arrow[twoheadrightarrow,d] & X_{ss}\arrow[hook,r]\arrow[twoheadrightarrow,d]& X\\
    X_s/G\arrow[hook,r]& X\git G.
\end{tikzcd}$$
We will need the following fact:
\begin{lem}\label{lem:can}
    Suppose the action of $G$ on $X_{ss}$ is free and $X_s=X_{ss}$. Then the canonical bundle $\omega_Y$ of $Y=X\git G$ is trivial if and only if $\omega_{X_{ss}}$ is trivial as a $G$-linearized line bundle.
\end{lem}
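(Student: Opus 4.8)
The statement is a standard descent fact: when $G$ acts freely on $X_{ss}$ and the stable and semistable loci coincide, the quotient map $q\colon X_{ss}\to Y$ is a principal $G$-bundle (in particular faithfully flat), so line bundles on $Y$ correspond exactly to $G$-linearized line bundles on $X_{ss}$. The plan is to first record this equivalence of categories, then check that it matches up canonical bundles, and finally observe that triviality is a property preserved in both directions.

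\medskip

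\noindent\textbf{Step 1: $q$ is a principal $G$-bundle.} Under the hypotheses $X_s=X_{ss}$ and freeness, the quotient $q\colon X_{ss}\to Y$ is a geometric quotient with all fibers single free orbits, and by Luna's slice theorem (or directly, since the action is free and proper on the semistable locus) $q$ is in fact a principal $G$-bundle, hence faithfully flat and locally trivial in the \'etale (even Zariski, for special $G$ like $\mathrm{PGL}_3$ after passing to a cover — but \'etale suffices) topology. This is the geometric input; I would cite it rather than reprove it.

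\medskip

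\noindent\textbf{Step 2: descent of line bundles.} Faithfully flat descent along $q$ gives an equivalence between the category of quasicoherent sheaves on $Y$ and the category of quasicoherent sheaves on $X_{ss}$ equipped with descent data for $q$; since $q$ is a principal $G$-bundle, descent data is the same as a $G$-linearization. Restricting to line bundles, $q^*\colon \operatorname{Pic}(Y)\xrightarrow{\sim}\operatorname{Pic}^G(X_{ss})$ is an isomorphism of groups, with inverse given by $G$-invariant pushforward. I would state this cleanly and then only need one more ingredient: that this isomorphism sends $\omega_Y$ to $\omega_{X_{ss}}$ with its natural linearization.

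\medskip

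\noindent\textbf{Step 3: canonical bundles match.} Because $q$ is smooth (a principal bundle under a smooth group) of relative dimension $\dim G$, there is a short exact sequence $0\to q^*\Omega^1_Y\to \Omega^1_{X_{ss}}\to \Omega^1_{X_{ss}/Y}\to 0$, and the relative cotangent sheaf is canonically trivial: $\Omega^1_{X_{ss}/Y}\cong \mathcal{O}_{X_{ss}}\otimes \mathfrak{g}^\vee$ via the action, with the coadjoint $G$-linearization. Taking top exterior powers, $q^*\omega_Y\cong \omega_{X_{ss}}\otimes \mathcal{O}_{X_{ss}}\otimes (\det\mathfrak{g}^\vee)$ as $G$-linearized bundles. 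Now $G=\mathrm{PGL}_3$ (in our application) is connected and its own commutator, so it has no nontrivial characters, and $\det\mathfrak{g}^\vee$ is the trivial character; hence $q^*\omega_Y\cong \omega_{X_{ss}}$ as $G$-linearized line bundles. (For a general reductive $G$ one should at least note that $\det\mathfrak g$ is trivial because the adjoint action of $G$ on $\det\mathfrak g$ is a character of $G$ that is trivial on the maximal torus by the Weyl-group symmetry of the roots; in our setting this subtlety does not arise.) Combining with Step 2, $\omega_Y$ is trivial in $\operatorname{Pic}(Y)$ if and only if $q^*\omega_Y\cong\omega_{X_{ss}}$ is trivial in $\operatorname{Pic}^G(X_{ss})$, which is exactly the claim.

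\medskip

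\noindent\textbf{Main obstacle.} The only genuinely delicate point is getting the linearizations right in Step 3 — i.e.\ verifying that the isomorphism $\Omega^1_{X_{ss}/Y}\cong\mathcal{O}_{X_{ss}}\otimes\mathfrak g^\vee$ is $G$-equivariant for the coadjoint action and that $\det\mathfrak g^\vee$ contributes a trivial linearization; everything else is a formal consequence of faithfully flat descent. One should be slightly careful that ``trivial as a line bundle'' and ``trivial as a $G$-linearized line bundle'' can differ in general (they differ by a character of $G$), which is precisely why the ``as a $G$-linearized line bundle'' qualifier appears in the statement and why the no-nontrivial-characters property of $\mathrm{PGL}_3$ is what makes the plain-bundle version usable downstream.
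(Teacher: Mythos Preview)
Your proposal is correct and follows essentially the same approach as the paper: both use the $G$-equivariant cotangent sequence $0\to q^*\Omega_Y\to\Omega_{X_{ss}}\to\mathfrak{g}^\vee\otimes\mathcal{O}_{X_{ss}}\to 0$, take top exterior powers, and use triviality of the $G$-action on $\det\mathfrak{g}$ to obtain $q^*\omega_Y\cong\omega_{X_{ss}}$ as $G$-linearized bundles. The only cosmetic difference is that you package the final step via the descent equivalence $\operatorname{Pic}(Y)\cong\operatorname{Pic}^G(X_{ss})$, whereas the paper phrases it directly in terms of nowhere-vanishing $G$-invariant sections; these are the same statement.
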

\begin{proof}
    There is a $G$-equivariant short exact sequence
    \begin{equation}\label{eq:ses}
        0\to \rho^*\Omega_Y\to \Omega_{X_{ss}}\to \mathfrak{g}^\vee\otimes \mc O_{X_{ss}}\to 0
    \end{equation}
    where $\Omega_Y$ and $\Omega_{X_{ss}}$ denote the cotangent bundles on $Y$ and $X_{ss}$, $\rho:X_{ss}\to Y$ is the quotient map, and $\mathfrak{g}$ is the Lie algebra of $G$ with the adjoint representation (e.g., \cite{torres}*{\S 2.2}). Since $G$ acts trivially on the top exterior power of $\mathfrak g$, taking the top exterior power of \eqref{eq:ses} yields
    $$\rho^*\omega_Y\cong \omega_{X_{ss}}$$
    as $G$-linearized line bundles. Hence, if either bundle has a nowhere-vanishing $G$-invariant global section, the other does as well. Since $G$-invariant sections of $\rho^*\omega_Y$ are exactly sections of $\omega_Y$, the lemma is proved.
\end{proof}

For our purposes, we take $X=(\PP^2)^n$ for $n\geq 4$ and $G=\mathrm{PGL}(3)$. We refer to points of $(\PP^2)^n$ as \emph{arrangements} of $n$ points in $\PP^2$. Line bundles on $(\PP^2)^n$ are all of the form $$\mc O(d_1,\dots,d_n)=\pi_1^*\mc O(d_1)\otimes \cdots\otimes \pi_n^*\mc O(d_n)$$
where $\pi_i$ is the $i$-th projection onto $\PP^2$. These have a canonical $\mathrm{PGL}(3)$-linearization when $3$ divides $\sum_i d_i$ and are ample when $d_i>0$. It turns out that the semistable locus has a straightforward description in this case.
\begin{lem}[e.g., \cite{incensi}*{Proposition 1.1}] \label{lem:stab}
    Let $d=\sum_i d_i$ and $w_i=d_i/d$. Then $(p_1,\dots,p_n)$ is semistable if and only if
\begin{enumerate}
    \item for all $p\in \PP^2$, 
    $$\sum_{p_i=p}w_i\leq \frac{1}{3},$$
    and
    \item for all lines $\ell\subset \PP^2$,
    $$\sum_{p_i\in \ell}w_i\leq \frac{2}{3}.$$
\end{enumerate}
Stable points are characterized the same way, but with strict inequalities.
\end{lem}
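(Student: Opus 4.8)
The plan is to derive the lemma from the Hilbert--Mumford numerical criterion, which says that an arrangement $x=(p_1,\dots,p_n)$ is semistable (respectively stable) for a $G$-linearized ample line bundle if and only if the Mumford weight $\mu(x,\lambda)$ is nonnegative (respectively positive) for every nontrivial one-parameter subgroup $\lambda\colon\mathbb{G}_m\to G$. Since $\mu(gx,g\lambda g^{-1})=\mu(x,\lambda)$ and every one-parameter subgroup of $\mathrm{PGL}(3)$ is conjugate to the image of a diagonal subgroup $\lambda(t)=\mathrm{diag}(t^{r_1},t^{r_2},t^{r_3})$ of $\mathrm{GL}(3)$, it is enough to test such $\lambda$; after replacing $\lambda$ by a power and normalizing by the central torus (which is where the hypothesis $3\mid d$ is used), we may assume $r_1\geq r_2\geq r_3$ and $r_1+r_2+r_3=0$.

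First I would record the two standard facts about Mumford weights on products of projective spaces. The weight is additive over the factors, so $\mu(x,\lambda)=\sum_i d_i\,\mu(p_i,\lambda)$, where $\mu(p_i,\lambda)$ is the weight for the $\mc O(1)$-linearization on the $i$-th copy of $\PP^2$; and for a point $p\in\PP^2$ this weight depends only on the position of $p$ relative to the coordinate flag $\{e_1\}\subset\overline{e_1e_2}$ determined by $\lambda$, being equal to $-r_1$ if $p=e_1$, to $-r_2$ if $p\in\overline{e_1e_2}\smallsetminus\{e_1\}$, and to $-r_3$ otherwise. Both are the usual weight computations on projective space (Mumford's GIT or Dolgachev's lectures); alternatively one can simply cite \cite{incensi} for the whole statement.

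Conjugating $\lambda$, a general diagonal one-parameter subgroup corresponds to a choice of a point $q\in\PP^2$ and a line $\ell\ni q$. Writing $a=\sum_{p_i=q}d_i$, $b=\sum_{p_i\in\ell,\,p_i\neq q}d_i$, and $c=d-a-b$, the facts above give $\mu(x,\lambda)=-(ar_1+br_2+cr_3)$. Substituting $r_1=r_2+s$ and $r_3=r_2-s'$ with $s,s'\geq 0$ not both zero, the relation $r_1+r_2+r_3=0$ forces $r_2=(s'-s)/3$, and a one-line computation yields
$$\mu(x,\lambda)=-\Bigl(s\bigl(a-\tfrac{d}{3}\bigr)+s'\bigl(\tfrac{d}{3}-c\bigr)\Bigr).$$
Hence $\mu(x,\lambda)\geq 0$ for all such $\lambda$ if and only if $a\leq d/3$ and $c\geq d/3$, i.e.\ $\sum_{p_i=q}w_i\leq 1/3$ and $\sum_{p_i\in\ell}w_i=(a+b)/d\leq 2/3$; letting $q$ and $\ell$ range over all points and lines recovers precisely conditions (1) and (2). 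For stability one instead needs $\mu(x,\lambda)>0$ for every nontrivial $\lambda$, which the displayed formula shows is equivalent to the strict inequalities $a<d/3$ and $c>d/3$ for all flags.

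I do not expect a genuine obstacle here: this is textbook GIT, and the only delicate points are bookkeeping ones --- fixing the sign conventions in the definition of $\mu(x,\lambda)$, and verifying that the $\det$-twist needed to produce the $\mathrm{PGL}(3)$-linearization --- the reason for the hypothesis $3\mid\sum d_i$ --- contributes nothing to the weight once $r_1+r_2+r_3=0$. Everything else is the elementary linear-programming step above, and the most economical write-up may well be to quote \cite{incensi}*{Proposition 1.1} (or the corresponding statement in Dolgachev's book) and omit the computation altogether.
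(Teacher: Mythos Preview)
Your argument via the Hilbert--Mumford criterion is correct and is the standard proof of this fact. The paper, however, gives no proof at all: the lemma is stated with the attribution ``[e.g., \cite{incensi}*{Proposition 1.1}]'' and is simply quoted as a known result, precisely the option you raise in your last sentence. So your write-up is strictly more detailed than what the paper does; if you want to match the paper, delete the computation and cite the reference.
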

We think of the $w_i$ as weights for the $n$ points, where an arrangement is unstable if too much weight is concentrated at a point or on a line. A choice of weights $w=(w_i)$ is called a \emph{weighting}, and the corresponding GIT quotient is denoted $\Q_w=(\PP^2)^n\git_w\mathrm{PGL}(3)$.

Two natural weightings come to mind. For the first, we designate $4$ of the $n$ points as ``heavy" and assign them weights close to $1/4$; the others are given nearly zero weight. We call this the \emph{oligarchic weighting}. Here, the sets of stable and semistable arrangements agree (there are no \emph{strictly semistable} points). Per Lemma~\ref{lem:stab}, an arrangement is stable exactly when no two of the four heavy points coincide and no three of them are collinear, with no restrictions on the other points. For this weighting, it is easy to see that the GIT quotient is isomorphic to $(\PP^2)^{n-4}$; we just fix the four heavy points to standard positions, and the others can be anywhere.

At the other extreme, we have the \emph{democratic weighting} $\delta$, where all weights $\delta_i$ are equal to $1/n$. Semistability now means that there are at most $n/3$ coincident points and at most $2n/3$ points on any line. Note that this is the same as stability unless $3$ divides $n$. The corresponding quotient $\Q_\delta$ is not as easy to describe as for the oligarchic weighting. The following lemma affords us a concrete characterization of the stable part $\Q_{w,s}$ of $\Q_w$ for any weighting $w$.
\begin{dfn}
Four points in $\PP^2$ are said to form a \emph{frame} if no three are collinear. We say that an arrangement $(p_1,\dots,p_n)\in (\PP^2)^n$ has a frame if some choice of four $p_i$ is a frame. The frame
$$f_1=[1:0:0],\quad f_2=[0:1:0],\quad f_3=[0:0:1],\quad f_4=[1:1:1]$$
is called the \emph{standard frame}.
\end{dfn}

\begin{lem}[\cite{keeltevelev}*{Lemma 8.6}] \label{lem:frame}
Every arrangement which is stable with respect to some weighting has a frame.
\end{lem}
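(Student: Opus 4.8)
The plan is to prove the contrapositive: an arrangement $(p_1,\dots,p_n)$ with no frame is unstable with respect to every weighting $w$. Having no frame means that every set of four points among the $p_i$ contains three collinear points; I would first distill the combinatorial content of this hypothesis. The key claim is that if no four points form a frame, then either all of the $p_i$ lie on a single line $\ell$, or else there is a line $\ell$ containing all but at most one of the $p_i$. Indeed, suppose not all points are collinear; pick three of them, $p_a,p_b,p_c$, that are not collinear. If there were two further points $p_d,p_e$ each lying off the union of the three lines $\overline{p_ap_b}$, $\overline{p_ap_c}$, $\overline{p_bp_c}$, one could argue (by checking the few ways four of $\{p_a,p_b,p_c,p_d,p_e\}$ can fail to be a frame) that some four of these five points form a frame, a contradiction; so at most one $p_i$ lies off that configuration of three concurrent-in-pairs lines. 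A short case analysis then upgrades this to: all but at most one point lies on a common line $\ell$. (This is exactly the kind of elementary incidence argument one expects; the cited \cite{keeltevelev} presumably phrases it slightly differently, but the content is the same.)

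Granting the structural claim, the instability is immediate from Lemma~\ref{lem:stab}. In the first case, all $n$ points lie on a line $\ell$, so $\sum_{p_i\in\ell} w_i = \sum_i w_i = 1 > 2/3$, violating condition (2) with strict inequality, hence the arrangement is not stable for any $w$. In the second case, there is a line $\ell$ and an index $j$ with $p_i\in\ell$ for all $i\neq j$; then $\sum_{p_i\in\ell} w_i \geq \sum_{i\neq j} w_i = 1 - w_j$. Since any weighting has some $w_j \leq 1/n \leq 1/4 < 1/3$, we get $\sum_{p_i\in\ell} w_i \geq 1 - w_j > 2/3$ whenever $w_j < 1/3$; and we may always choose the exceptional point $j$ to be one of smallest weight, so $w_j\leq 1/n<1/3$. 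Again condition (2) fails strictly, so the arrangement is unstable for every weighting. Taking the contrapositive gives the lemma.

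I expect the only real work to be the structural claim — that a frameless arrangement has all but at most one point on a line — since once that is in hand the GIT part is a one-line application of the semistability criterion. Within that claim, the mild subtlety is organizing the finite case analysis cleanly: one must rule out having two points off a ``triangle of lines'' without an exhaustive and unilluminating enumeration. A tidy way to do this is to induct on $n$: for $n=4$ the statement is essentially the definition, and for the inductive step one removes a point, applies the hypothesis, and checks that re-adding it cannot create a frame unless the point structure is as claimed. Since the paper is content to cite \cite{keeltevelev}*{Lemma 8.6} for this, I would either reproduce the short incidence argument or simply invoke that reference; the GIT consequence via Lemma~\ref{lem:stab} is what matters for the sequel.
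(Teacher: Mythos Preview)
Your structural claim --- that a frameless arrangement has all but at most one of its points on a single line --- is false, and this breaks the argument. Take $n=6$ with two points at each vertex of a triangle $A,B,C$. Any four of these points include a coincident pair, say both copies of $A$; then $A,A,p_k$ are collinear for any third point $p_k$, so no frame exists. Yet the most collinear points you can find is four (on a side of the triangle), not five. This is precisely the example the paper gives immediately after the lemma to show that strictly semistable arrangements need not have a frame. Your ``short case analysis'' that upgrades ``all points lie on the three sides of a triangle'' to ``all but one lie on a single line'' cannot succeed, and the proposed induction will hit the same wall. There is also a smaller slip in your second case: the exceptional index $j$ is dictated by the geometry, not chosen; you cannot arrange for it to carry the smallest weight. (That case can be repaired by splitting on whether $w_j<1/3$, but the main obstruction remains.)

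The paper's proof is direct rather than by contrapositive, and it uses the weights in an essential way to \emph{locate} the frame. Starting from a non-collinear triple $p_1,p_2,p_3$, one may assume every other point lies on one of the three sides. Stability gives $W_i<1/3$ for the total weight at each vertex, so $W_1+W_2+W_3<1$ forces a fourth point $p_4$ distinct from the vertices, say on $\overline{p_1p_2}$. Then $W_{12}+W_3<1$ (where $W_{12}<2/3$ is the weight on $\overline{p_1p_2}$) forces a fifth point $p_5$ off $\overline{p_1p_2}$ and distinct from $p_3$, say on $\overline{p_2p_3}$. Now $p_1,p_3,p_4,p_5$ is a frame. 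The weight inequalities are doing real work here; a purely combinatorial contrapositive cannot substitute for them, as the triangle example shows.
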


\begin{proof}
It is clear that there are at least three non-collinear points in the arrangement, say $p_1,p_2,p_3$. If there is a fourth point not collinear with any two of $p_1,p_2,p_3$, then we're done, so suppose all other points lie on one of $\overline{p_1p_2}$, $\overline{p_1p_3}$, or $\overline{p_1p_3}$. Let $W_1$ be the combined weight of all points coincident with $p_1$, and similarly for $W_2$ and $W_3$. By stability, $W_i<1/3$. Since the sum of all weights is $1$ and $W_1+W_2+W_3<1$, there must be a point $p_4$ not coincident with $p_1$, $p_2$, or $p_3$. Suppose $p_4$ lies on $\overline{p_1p_2}$. Let $W_{12}<2/3$ be the combined weight of all points on $\overline{p_1p_2}$. Then $W_{12}+W_3<1$, so there must be a point $p_5$ neither on the line $\overline{p_1p_2}$ nor equal to $p_3$; say it lies on $\overline{p_2p_3}$. Then $p_1,p_3,p_4,p_5$ is a frame, as required.
\end{proof}

It is worth noting that there do exist strictly semistable arrangements that do not admit a frame. For example, when $3$ divides $n$, an arrangement with $n/3$ points at each vertex of a triangle is semistable with respect to the democratic weighting, but does not have a frame. 

\begin{cor}\label{cor:cover}
There exists an cover of $\Q_{w,s}$ by open subsets isomorphic to
$$U_{\i}=\left\{(p_i)_{i\not\in \i}\in(\PP^2)^{n-4}\mid (p_i)\in (\PP^2)^n_s\text{ where } p_{i_1}=f_1,\dots,p_{i_4}=f_4\right\}$$
indexed by $\i=\{i_1,\dots,i_4\}\subseteq\{0,\dots,n-1\}$ with $i_1<\cdots<i_4$. 

\end{cor}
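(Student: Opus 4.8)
The plan is to construct the cover directly from Lemma~\ref{lem:frame}. For each four-element subset $\i=\{i_1<i_2<i_3<i_4\}$ of $\{0,\dots,n-1\}$, let $V_{\i}\subseteq (\PP^2)^n_s$ be the locus of stable arrangements $(p_0,\dots,p_{n-1})$ for which $p_{i_1},p_{i_2},p_{i_3},p_{i_4}$ form a frame. Being a frame is the simultaneous nonvanishing of the four $3\times 3$ minors built from the homogeneous coordinates of those points, so $V_{\i}$ is open in $(\PP^2)^n_s$, and it is plainly $\mathrm{PGL}(3)$-invariant. By Lemma~\ref{lem:frame}, every stable arrangement lies in some $V_{\i}$, so the $V_{\i}$ cover $(\PP^2)^n_s$.

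Next I would use the classical fact that $\mathrm{PGL}(3)$ acts simply transitively on frames: for any frame $(q_1,q_2,q_3,q_4)$ there is a unique $g\in\mathrm{PGL}(3)$ with $g\cdot f_j=q_j$, and $g$ is an algebraic function of the $q_j$ (read off from their coordinate matrices by Cramer's rule). Hence the morphism
$$\mathrm{PGL}(3)\times U_{\i}\longrightarrow V_{\i},\qquad \bigl(g,(p_i)_{i\notin\i}\bigr)\longmapsto g\cdot(p_i),$$
where on the right the entries in positions $\i$ are filled in by the standard frame, is an isomorphism, equivariant for left translation of $\mathrm{PGL}(3)$ on the first factor; its inverse sends a stable arrangement in $V_{\i}$ to $(g^{-1}\cdot p_i)_{i\notin\i}$, with $g$ the unique transformation carrying the standard frame onto $(p_{i_j})_j$. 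Projection to the second factor therefore exhibits $U_{\i}$ as a geometric quotient of $V_{\i}$ by $\mathrm{PGL}(3)$; in particular the action on $V_{\i}$, hence on all of $(\PP^2)^n_s$, is free.

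Finally I would transfer this to the GIT quotient. The quotient map $\rho\colon (\PP^2)^n_s\to \Q_{w,s}$ is a geometric quotient, hence submersive, and $V_{\i}$ is a $\mathrm{PGL}(3)$-invariant (so saturated) open subset, whence $\rho^{-1}(\rho(V_{\i}))=V_{\i}$ and $\rho(V_{\i})$ is open in $\Q_{w,s}$. The restriction of a geometric quotient to a saturated open set is again a geometric quotient, so $V_{\i}\to\rho(V_{\i})$ is one; by uniqueness of geometric quotients this canonically identifies $\rho(V_{\i})$ with $U_{\i}$. Since the $V_{\i}$ cover $(\PP^2)^n_s$, the open subsets $\rho(V_{\i})\cong U_{\i}$ cover $\Q_{w,s}$.

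The only delicate point is this last transfer step: one must ensure that the abstract isomorphism $V_{\i}/\mathrm{PGL}(3)\cong U_{\i}$ is compatible with the open embedding into $\Q_{w,s}$, i.e., that $U_{\i}$ is genuinely an open subscheme of the GIT quotient rather than merely a variety admitting a bijective morphism to one. This is precisely the statement that geometric quotients are unique and restrict well to saturated opens; everything else is either classical projective geometry or a formal consequence of the GIT setup recalled above.
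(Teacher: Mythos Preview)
Your argument is correct and follows essentially the same route as the paper: define the frame locus for each $\i$, use Lemma~\ref{lem:frame} to see these cover the stable locus, trivialize each by sending the frame to the standard one, and identify the resulting slice with the corresponding open piece of $\Q_{w,s}$. Your write-up is in fact more careful than the paper's, which simply asserts that the quotient of the frame locus ``can be identified with $(\PP^2)^{n-4}$'' without spelling out the trivialization $V_{\i}\cong \mathrm{PGL}(3)\times U_{\i}$ or the restriction-to-saturated-opens step; your attention to that last point is well placed, but there is no genuine difference in strategy.
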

\begin{proof}
 Suppose $(q_i)\in (\PP^2)^n_s$. By Lemma~\ref{lem:frame}, there are $q_{i_1},\dots,q_{i_4}$ that form a frame. Consider the $\mathrm{PGL}(3)$-invariant open set 
\begin{equation}\label{eq:cover}
    \left\{(p_i)\in (\PP^2)^n\mid p_{i_1},\dots,p_{i_4}\text{ form a frame}\right\}.
\end{equation}
As before, the quotient of this set by $\mathrm{PGL}(3)$ can be identified with $(\PP^2)^{n-4}$ by fixing $p_{i_1},\dots,p_{i_4}$ to be the standard frame. Intersecting \eqref{eq:cover} with the stable locus $(\PP^2)^n_s$ and passing to $\Q_{w,s}$ yields an open subset of $\Q_{w,s}$ containing the image of $(q_i)$ and isomorphic to $U_{\i}$. Hence these subsets cover $\Q_{w,s}$, as claimed.
\end{proof}

$\Q_{w,s}$ carries a universal $\PP^2$-bundle $B\to \Q_{w,s}$ with $n$ sections $P_0,\dots,P_{n-1}$ which are $w$-stable in each fiber $\PP^2$. Here, ``universal" means that any $\PP^2$-bundle on a variety $Y$ with $n$ fiberwise $w$-stable sections is the pullback of $B$ under a unique morphism $Y\to \Q_{w,s}$. We say that $\Q_{w,s}$ is a \emph{fine moduli space} for $w$-stable arrangements. The open cover $\{U_{\i}\}$ gives a local trivialization of $B$, where the sections $P_{i_1}=f_1,\dots,P_{i_4}=f_4$ are constant and the others are given by the $n-4$ projections $U_{\i}\hookrightarrow (\PP^2)^{n-4}\to \PP^2$. This is most interesting when there are no strictly $w$-semistable arrangements, so $\Q_{w,s}=\Q_w$ is the entire GIT quotient.

Consider now a configuration $\mc L$ with $n$ points, as in the introduction.
\begin{dfn}
An arrangement $(p_i)\in (\PP^2)^n$ is called a \emph{weak realization} of $\mc L$ if for all $\ell\in \mc L$ with $i,j,k\in \mc L$, the points $p_i,p_j,p_k$ are collinear.
\end{dfn}
The set of weak realizations forms a closed $\mathrm{PGL}(3)$-invariant subvariety $W$ of $(\PP^2)^n$ containing the set of realizations $V$ from the introduction as an open subset. This gives us a systematic method for furnishing compactifications of $\mc R(\mc L)$, provided that we choose a weighting $w$ such that all realizations of $\mc L$ are stable.
\begin{dfn}
The GIT quotient
$$\mc R_w(\mc L)= W\git_w\mathrm{PGL}(3)\subset \Q_w$$
is called the \emph{$w$-semistable realization space} of $\mc L$.
\end{dfn}
This is the coarse moduli space for $w$-semistable weak realizations of $\mc L$. When there are no strictly $w$-semistable arrangements, it is a fine moduli space with universal family pulled back from the one on $Q_w$. The closure of $\mc R(\mc L)$ in $\mc R_w(\mc L)$ is a compactification of $\mc R(\mc L)$ whose boundary points correspond to $w$-semistable degenerations of realizations of $\mc L$.

With these generalities in hand, we return to our $10_3$ configurations $\L{V}$, $\L{VIII}$, $\L{IX}$, and $\L{X}$. Seeing how $S$ was constructed in Section~\ref{sec:ell} by fixing four points, one might hope to identify $S$ or $\Sbar$ with the semistable realization space in the corresponding oligarchic quotient $(\PP^2)^6$. However, the rational map $S\dashrightarrow (\PP^2)^6$ sending a point in $S$ to its corresponding arrangement is not a morphism; its composition with $\Sbar\to S$ is, but this fails to be injective. The oligarchic realization space thus turns out to lie ``between" $S$ and $\Sbar$, so it does not grant the moduli interpretation we seek. 

In fact, it is the democratic weighting $\delta=\left(\frac{1}{10},\dots,\frac{1}{10}\right)$ which realizes $\Sbar$ and the other K3 surfaces. One easily checks that all realizations of any $10_3$ configuration are $\delta$-stable.

\begin{ntn} \label{ntn:cover}
Let $\mc L_N$ be any of $\L{V}$, $\L{VIII}$, $\L{IX}$, and $\L{X}$, and let $\widetilde{S_N}$ be the corresponding K3 surface from Theorem \ref{part:k3}. For $\i=\{i_1,\dots,i_4\}\subset \{0,\dots,9\}$ with $i_1<\cdots <i_4$, let $X_{\i}$ be the closed subset of $(\PP^2)^6$ corresponding to weak realizations $(p_i)\in(\PP^2)^{10}$ of $\mc L_N$ with $p_{i_1},\dots,p_{i_4}$ fixed to the standard frame. Let $X_{\i,s}=X_{\i}\cap U_{\i}$ be the open subset of $X_{\i}$ corresponding to $\delta$-stable arrangements. 
\end{ntn}

Since $3$ does not divide $10$, we have $\Q_\delta=\Q_{\delta,s}$. By Corollary~\ref{cor:cover}, the $X_{\i,s}$ form an open cover of $\mc R_\delta(\mc L_N)$. 
\begin{lem}
$\mc R(\mc L_N) \subseteq X_{\i,s}$ as subsets of $\mc R_\delta(\mc L_N)$.
\end{lem}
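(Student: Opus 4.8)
The plan is to establish the inclusion by unwinding the identifications involved, using two facts already available: every realization of a $10_3$ configuration is $\delta$-stable (noted just before the lemma), and the index set $\i$ can be---and, for the statement to be true, must be---chosen so that no three of $i_1,\dots,i_4$ lie on a common line of $\mc L_N$. I would first record this last point: if $\{i_a,i_b,i_c\}\in\mc L_N$, then in any weak realization $p_{i_a},p_{i_b},p_{i_c}$ are collinear, which is impossible when those three points are among the vertices of the standard frame; so $X_\i$, and hence $X_{\i,s}$, is empty. Thus I take $\i$ to contain no line of $\mc L_N$---such $\i$ exist, as was already needed to construct $S_N$ in Section~\ref{sec:ell}---and note that these are exactly the index sets for which $X_{\i,s}$ is nonempty.

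The body of the argument would run as follows. Let $(p_i)\in(\PP^2)^{10}$ be any realization of $\mc L_N$. First I would note that $(p_i)$ is injective: in a $10_3$ configuration each point lies on only three lines and so is collinear with at most six others, so if $p_a=p_b$ with $a\neq b$ we may pick $r$ not collinear with $a$ in $\mc L_N$, and then $p_a,p_b,p_r$ are collinear in $\PP^2$ while $a,b,r$ lie on no line of $\mc L_N$, contradicting the definition of a realization. Since in a realization three points are collinear exactly when a line of $\mc L_N$ contains them, and $\i$ contains no such line, the four distinct points $p_{i_1},\dots,p_{i_4}$ form a frame. Let $g\in\mathrm{PGL}(3)$ be the unique element with $g\cdot p_{i_j}=f_j$ for $j=1,\dots,4$. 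Then $g\cdot(p_i)$ is again a realization of $\mc L_N$ (the set of realizations is $\mathrm{PGL}(3)$-invariant), in particular a weak realization, and its $\i$-points sit at the standard frame, so $g\cdot(p_i)\in X_\i$. Since $(p_i)$ is $\delta$-stable and the stable locus $(\PP^2)^{10}_s$ is $\mathrm{PGL}(3)$-invariant, $g\cdot(p_i)$ is $\delta$-stable, i.e., lies in $U_\i$; hence $g\cdot(p_i)\in X_\i\cap U_\i=X_{\i,s}$. Finally, under the identification of $X_{\i,s}$ with an open subset of $\mc R_\delta(\mc L_N)$ coming from Corollary~\ref{cor:cover}, the point $g\cdot(p_i)$ maps to the image of the $\mathrm{PGL}(3)$-orbit of $(p_i)$ under the GIT quotient map $W_{ss}\to W\git_\delta\mathrm{PGL}(3)=\mc R_\delta(\mc L_N)$, which is exactly the point of $\mc R(\mc L_N)\subseteq\mc R_\delta(\mc L_N)$ represented by $(p_i)$. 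As $(p_i)$ was arbitrary and every point of $\mc R(\mc L_N)$ comes from such a realization, this yields $\mc R(\mc L_N)\subseteq X_{\i,s}$.

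The proof is in essence bookkeeping, so I do not anticipate a serious obstacle; the one step that genuinely needs care---and that I would take pains to spell out---is that the two occurrences of ``$\subseteq\mc R_\delta(\mc L_N)$'' in the statement refer to the same object. Concretely, one must verify that the chart isomorphism between $X_{\i,s}$ and an open subset of $\mc R_\delta(\mc L_N)$ furnished by Corollary~\ref{cor:cover} is compatible with the open embedding $\mc R(\mc L_N)\hookrightarrow\mc R_\delta(\mc L_N)$; since both arise by restricting the single GIT quotient map on the weak-realization variety $W$, tracking that one map is what makes the comparison go through.
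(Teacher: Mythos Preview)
Your argument is correct and follows essentially the same route as the paper: both reduce to the observation that if $X_{\i,s}$ is nonempty then no three of $i_1,\dots,i_4$ lie on a line of $\mc L_N$, whence in any realization they form a frame, and realizations are $\delta$-stable. The paper states this in three sentences, whereas you spell out injectivity of realizations and the compatibility of the chart map with the GIT quotient---extra care that is welcome but not strictly needed. One small quibble: your aside that the $\i$ containing no line of $\mc L_N$ are ``exactly the index sets for which $X_{\i,s}$ is nonempty'' asserts a biconditional whose forward direction you have not yet justified (it is in fact a consequence of the lemma you are proving); since you do not use that direction, you can simply drop the word ``exactly.''
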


\begin{proof}
Suppose $X_{\i,s}$ is nonempty. Then there is an arrangement $(p_i)$ satisfying the collinearities in $\mc L_N$ (and possibly some not in $\mc L_N$) such that $p_{i_1},\dots,p_{i_4}$ form a frame. This means there is no line in $\mc L_N$ containing any three of those points. It follows that they form a frame in any realization of $\mc L_N$. Realizations of $\mc L_N$ are stable, so we have $\mc R(\mc L_N)\subseteq X_{\i,s}$, as desired. 
\end{proof}

\begin{lem}\label{lem:nonsing}
$\mc R_\delta(\mc L_N)$ is nonsingular and irreducible of dimension $2$.
\end{lem}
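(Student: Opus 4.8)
The plan is to establish the three properties on the explicit open cover of $\mc R_\delta(\mc L_N)$ given by Corollary~\ref{cor:cover} and then assemble them. The charts $X_{\i,s}$, indexed by the $4$-element subsets $\i\subseteq\{0,\dots,9\}$, cover $\mc R_\delta(\mc L_N)$, and by the preceding lemma every nonempty chart contains $\mc R(\mc L_N)$, which by Theorem~\ref{part:k3} is a nonempty nonsingular irreducible surface (a Zariski-open subset of the K3 surface $\widetilde{S_N}$). So the lemma reduces to showing that every nonempty $X_{\i,s}$ is nonsingular and irreducible of dimension~$2$: granting this, $\mc R_\delta(\mc L_N)$ is covered by nonsingular surfaces and hence is nonsingular of pure dimension~$2$, and since $\mc R(\mc L_N)$, being $2$-dimensional, is dense in each irreducible chart $X_{\i,s}$, each chart is contained in the closure $\overline{\mc R(\mc L_N)}$, forcing $\mc R_\delta(\mc L_N)=\overline{\mc R(\mc L_N)}$, which is irreducible of dimension~$2$.

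It remains to treat the charts one at a time. For a $4$-element subset $\i$, the scheme $X_{\i}$ is the closed subscheme of $(\PP^2)^6$ cut out by the ten collinearity conditions of $\mc L_N$ after substituting the standard frame $f_1,\dots,f_4$ for the point-coordinates indexed by $\i$; a line of $\mc L_N$ meeting $\i$ in $r\le 2$ points contributes a multilinear equation in the $3-r$ remaining point-coordinates lying on it, while a line meeting $\i$ in all three points (which can occur only for subsets $\i$ with $X_{\i,s}=\emptyset$) contributes nothing. Since $\dim(\PP^2)^6=12$ and we impose ten equations, the expected dimension of $X_{\i}$ is~$2$; the content of the lemma is that, once the $\delta$-unstable locus is removed, this expectation holds and the result is smooth and integral.

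This is verified by computer algebra, collected in Computation~\ref{comp:k3}. There are $\binom{10}{4}=210$ subsets $\i$ --- only a handful up to the automorphism group of $\mc L_N$ --- and for each one with $X_{\i,s}\neq\emptyset$ the procedure is: first form the ideal $I_{\i}$ of $X_{\i}$ in the multigraded coordinate ring of $(\PP^2)^6$ and saturate it by the irrelevant ideals and by the ideal of the $\delta$-unstable locus, the latter being cut out explicitly by the determinantal coincidence and collinearity conditions bounded in Lemma~\ref{lem:stab}; next check that the saturated ideal is prime of codimension~$10$, so that $X_{\i,s}$ is integral of dimension~$2$; and finally check, via the Jacobian criterion, that the differentials of the ten defining equations are independent at every point of $X_{\i,s}$, so that $X_{\i,s}$ is nonsingular.

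The main obstacle is one of computational feasibility: testing primality and the Jacobian rank for the ideal of a $2$-dimensional subscheme of the $12$-dimensional variety $(\PP^2)^6$, over $\QQ$, and carrying this out for every essentially distinct placement of the four fixed points across all four configurations. There is also a conceptual pitfall to guard against: before the unstable arrangements are removed, $X_{\i}$ itself genuinely may be singular, or may carry lower-dimensional components supported on degenerate arrangements (for instance coincidences of points forced by the collinearities), so saturating by the unstable ideal is essential --- and the assertion that what survives is still nonsingular and irreducible of dimension~$2$ is precisely what the computation confirms.
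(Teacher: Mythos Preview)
Your proposal is correct and follows essentially the same route as the paper: reduce to the charts $X_{\i,s}$ of Corollary~\ref{cor:cover}, verify by computer algebra that each nonempty chart is nonsingular and irreducible, and assemble using the fact that all nonempty charts share the common open $\mc R(\mc L_N)$. Two small discrepancies are worth noting: the paper houses this verification in a dedicated Macaulay2 computation (Computation~\ref{comp:democratic}) rather than in Computation~\ref{comp:k3}, and its strategy is slightly different from yours---it checks that the full $X_{\i}$ is already irreducible and that the singular locus of $X_{\i}$ lies in the unstable locus, rather than first saturating by the unstable ideal and then checking primality and the Jacobian; either organization works.
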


\begin{proof}
In Computation~\ref{comp:democratic}, we find that for every $\i$, $X_{\i,s}$ is either empty, or nonsingular and irreducible. Since the nonempty $X_{\i,s}$ all intersect, it follows that $\mc R_\delta(\mc L_N)$ is nonsingular and connected, hence irreducible. Since $\mc R_\delta(\mc L_N)$ contains $\mc R(\mc L_N)$ as an open subset and $\mc R(\mc L_N)$ has dimension $2$, $\mc R_\delta(\mc L_N)$ has dimension $2$ as well.
\end{proof}

\begin{proof}[Proof of Theorem~\ref{part:moduli}]
By Lemma~\ref{lem:frame}, $\mathrm{PGL}(3)$ acts freely on $(\PP^2)^{10}_{ss}$. From Lemma~\ref{lem:nonsing}, we see that the preimage $W=\rho^{-1}(\mc R_\delta(\mc L_N))$ under the quotient map $\rho:(\PP^2)^{10}_{ss}\to \Q_\delta$ is smooth of dimension $10$. $W$ is cut out by the $10$ equations defining the lines of $\mc L_N$, so it's a complete intersection of codimension $10$. By the argument given in the introduction, $W$ has (equivariantly) trivial canonical class; by Lemma~\ref{lem:can}, $\mc R_\delta(\mc L_N)$ also has trivial canonical class. Since $\widetilde{S_N}$ and $\mc R_\delta(\mc L_N)$ are birational (both being compactifications of $\mc R(\mc L_N)$), it follows that $\mc R_\delta(\mc L_N)$ has zero irregularity, so it is a K3 surface. Birational K3 surfaces are isomorphic, so in fact $\widetilde{S_N}\cong \mc R_\delta(\mc L_N)$.
\end{proof}

\bibliography{k3}

@article{sturmfels,
title = {Computational algebraic geometry of projective configurations},
fjournal = {Journal of Symbolic Computation},
journal = {J. Symbolic Comput.},
volume = {11},
number = {5},
pages = {595-618},
year = {1991},
issn = {0747-7171},
doi = {https://doi.org/10.1016/S0747-7171(08)80121-6},
url = {https://www.sciencedirect.com/science/article/pii/S0747717108801216},
author = {Bernd Sturmfels},
abstract = {This article deals with algorithmic and structural aspects related to the computer-aided study of incidence configurations in plane projective geometry. We describe invariant-theoretic algorithms and complexity results for computing the realization space and deciding the coordinatizability of configurations. A practical procedure for automated theorem proving in projective geometry is obtained as a special case. We use the final polynomial technique of Bokowski and Whiteley for encoding the resulting proofs, and we apply Buch-berger's Gröbner basis method for computing minimum degree final polynomials and final syzygies, thus attaining the bounds in the recent effective versions of Hubert's Nullstellen-satz.}
}

@article{schuttshioda,
    AUTHOR = {Sch\"{u}tt, Matthias and Shioda, Tetsuji},
     TITLE = {Elliptic surfaces},
    JOURNAL = {In: Algebraic geometry in {E}ast {A}sia---{S}eoul 2008, {A}dv. {S}tud. {P}ure {M}ath.},
    VOLUME = {60},
     PAGES = {51--160},
 PUBLISHER = {Math. Soc. Japan, Tokyo},
      YEAR = {2010},
      ISBN = {978-4-931469-63-1},
       DOI = {10.2969/aspm/06010051},
       URL = {https://doi.org/10.2969/aspm/06010051},
}

@article {schutt,
    AUTHOR = {Sch\"{u}tt, Matthias},
     TITLE = {{$K3$} surfaces with {P}icard rank 20},
   JOURNAL = {Algebra Number Theory},
  FJOURNAL = {Algebra \& Number Theory},
    VOLUME = {4},
      YEAR = {2010},
    NUMBER = {3},
     PAGES = {335--356},
      ISSN = {1937-0652,1944-7833},
       DOI = {10.2140/ant.2010.4.335},
       URL = {https://doi.org/10.2140/ant.2010.4.335},
}

@article {Magma,
    AUTHOR = {Bosma, Wieb and Cannon, John and Playoust, Catherine},
     TITLE = {The {M}agma algebra system. {I}. {T}he user language},
      NOTE = {Computational algebra and number theory (London, 1993)},
   JOURNAL = {J. Symbolic Comput.},
  FJOURNAL = {Journal of Symbolic Computation},
    VOLUME = {24},
      YEAR = {1997},
    NUMBER = {3-4},
     PAGES = {235--265},
      ISSN = {0747-7171},
       DOI = {10.1006/jsco.1996.0125},
       URL = {http://dx.doi.org/10.1006/jsco.1996.0125},
}

@Misc{M2,
          author = {Grayson, Daniel R. and Stillman, Michael E.},
          title = {Macaulay2, a software system for research in algebraic geometry},
          note = {Available at \url{http://www.macaulay2.com}}
}

@article {gropp,
    AUTHOR = {Gropp, Harald},
     TITLE = {Configurations and their realization},
      NOTE = {Combinatorics (Rome and Montesilvano, 1994)},
   JOURNAL = {Discrete Math.},
  FJOURNAL = {Discrete Mathematics},
    VOLUME = {174},
      YEAR = {1997},
    NUMBER = {1-3},
     PAGES = {137--151},
      ISSN = {0012-365X,1872-681X},
       DOI = {10.1016/S0012-365X(96)00327-5},
       URL = {https://doi.org/10.1016/S0012-365X(96)00327-5},
}

@book {huybrechts,
    AUTHOR = {Huybrechts, Daniel},
     TITLE = {Lectures on {K}3 surfaces},
    SERIES = {Cambridge Studies in Advanced Mathematics},
    VOLUME = {158},
 PUBLISHER = {Cambridge University Press, Cambridge},
      YEAR = {2016},
     PAGES = {xi+485},
      ISBN = {978-1-107-15304-2},
       DOI = {10.1017/CBO9781316594193},
       URL = {https://doi.org/10.1017/CBO9781316594193},
}

@article {incensi,
    AUTHOR = {Incensi, Francesca},
     TITLE = {G{IT} quotients of products of projective planes},
   JOURNAL = {Rend. Semin. Mat. Univ. Padova},
  FJOURNAL = {Rendiconti del Seminario Matematico della Universit\`a di
              Padova. Mathematical Journal of the University of Padua},
    VOLUME = {123},
      YEAR = {2010},
     PAGES = {1--36},
      ISSN = {0041-8994,2240-2926},
      ISBN = {978-88-7784-321-0},
       DOI = {10.4171/RSMUP/123-1},
       URL = {https://doi.org/10.4171/RSMUP/123-1},
}

@misc{elkies,
  title={The maximal {Mordell-Weil} rank of an elliptic {K3} surface over {$\mathbb{Q}(t)$}},
  author={Elkies, Noam D.},
  note={Talk at the \textit{{Conference} on {Birational} {Automorphisms} of {Compact} {Complex} {Manifold} and {Dynamical} {Systems}} at {Nagoya} {University}, {Aug} 28, 2007}
}

@article {coxzucker,
    AUTHOR = {Cox, David A. and Zucker, Steven},
     TITLE = {Intersection numbers of sections of elliptic surfaces},
   JOURNAL = {Invent. Math.},
  FJOURNAL = {Inventiones Mathematicae},
    VOLUME = {53},
      YEAR = {1979},
    NUMBER = {1},
     PAGES = {1--44},
      ISSN = {0020-9910,1432-1297},
       DOI = {10.1007/BF01403189},
       URL = {https://doi.org/10.1007/BF01403189},
}

@article {lecacheux,
    AUTHOR = {Lecacheux, Odile},
     TITLE = {Weierstrass equations for all elliptic fibrations on the
              modular {$K3$} surface associated to {$\Gamma_1(7)$}},
   JOURNAL = {Rocky Mountain J. Math.},
  FJOURNAL = {The Rocky Mountain Journal of Mathematics},
    VOLUME = {45},
      YEAR = {2015},
    NUMBER = {5},
     PAGES = {1481--1509},
      ISSN = {0035-7596,1945-3795},
       DOI = {10.1216/RMJ-2015-45-5-1481},
       URL = {https://doi.org/10.1216/RMJ-2015-45-5-1481},
}

@article {shioda,
    AUTHOR = {Shioda, Tetsuji},
     TITLE = {On the {M}ordell-{W}eil lattices},
   JOURNAL = {Comment. Math. Univ. St. Paul.},
  FJOURNAL = {Commentarii Mathematici Universitatis Sancti Pauli},
    VOLUME = {39},
      YEAR = {1990},
    NUMBER = {2},
     PAGES = {211--240},
      ISSN = {0010-258X},
}

@article {mazur,
    AUTHOR = {Mazur, Barry},
     TITLE = {Modular curves and the {E}isenstein ideal},
      NOTE = {With an appendix by Mazur and M. Rapoport},
   JOURNAL = {Inst. Hautes \'{E}tudes Sci. Publ. Math.},
  FJOURNAL = {Institut des Hautes \'{E}tudes Scientifiques. Publications
              Math\'{e}matiques},
    NUMBER = {47},
      YEAR = {1977},
     PAGES = {33--186},
      ISSN = {0073-8301,1618-1913},
       URL = {http://www.numdam.org/item?id=PMIHES_1977__47__33_0},
}

@misc{reid,
  title = {The {Du} {Val} singularities {$A_n$}, {$D_n$}, {$E_6$}, {$E_7$}, {$E_8$}},
  note = {Available at \url{https://homepages.warwick.ac.uk/~masda/surf/more/DuVal.pdf}},
  author = {Reid, Miles}
}

@misc{torres,
      title={Bott vanishing using {G}{I}{T} and quantization}, 
      author={Sebastián Torres},
      year={2023},
      eprint={2003.10617},
      archivePrefix={arXiv},
      primaryClass={math.AG},
      note={Available at \url{https://arxiv.org/abs/2003.10617}. To appear in Michigan Math. J.}
}

@article {keeltevelev,
    AUTHOR = {Keel, Sean and Tevelev, Jenia},
     TITLE = {Geometry of {C}how quotients of {G}rassmannians},
   JOURNAL = {Duke Math. J.},
  FJOURNAL = {Duke Mathematical Journal},
    VOLUME = {134},
      YEAR = {2006},
    NUMBER = {2},
     PAGES = {259--311},
      ISSN = {0012-7094,1547-7398},
       DOI = {10.1215/S0012-7094-06-13422-1},
       URL = {https://doi.org/10.1215/S0012-7094-06-13422-1},
}

@article{schroeter,
author = {Schroeter, H.},
journal = {Nachr. Ges. Wiss. Göttingen},
fjournal = {Nachrichten von der Königl. Gesellschaft der Wissenschaften und der Georg-Augusts-Universität zu Göttingen},
pages = {193-236},
title = {{\"U}ber die {B}ildungsweise und geometrische {K}onstruction der {K}onfigurationen {$10_3$}},
url = {http://eudml.org/doc/180229},
year = {1889},
}

@article{kantor,
author = {Kantor, Seligmann},
journal = {Sitzungsber. Akad. Wiss. Wien Math.-Natur. Kl.},
pages = {1291-1314},
title = {{D}ie {C}onfigurationen {$(3,3)_{10}$}},
url = {http://eudml.org/doc/180229},
volue = {84},
year = {1881},
}

\newpage
\appendix

\section{Computations} \label{sec:comp}
The code used in this paper can be found at \url{https://github.com/eliassink/k3moduli}. 

\begin{comp}\label{comp:singularpoints} 
In the \href{https://github.com/eliassink/k3moduli/blob/main/magma/singularpoints}{magma/singularpoints} file, we compute the singular locus of $S=\Sfc{V}$. The result is a list of seven schemes representing the points
$$([1:1],[1:1:1]),\quad([1:1],[0:0:1]),\quad([1:1],[1:0:0]),$$
$$([0:1],[1:0:1]), \quad([0:1],[0:0:1]), \quad([0:1],[1:0:0]),$$
$$([1:0],[0:1:0]).$$
None of these points lie in $\Rone\subset S$, as the corresponding arrangements have unwanted collinearities; the first three have $1,5,7$ collinear, the middle three have $1,3,7$ collinear, and the last has $1,2,7$ collinear. We check that these singularities are all Du Val and compute their resolution graphs (all $A_n$ for $n\leq 3$; compare Figure~\ref{fig:ell}). We also find the singular points of fibers of $\pi:S\to \PP^1$, which gives the list of singular fibers in Section~\ref{sec:k3}.
\end{comp}
\begin{comp}\label{comp:mordellweil}
In the \href{https://github.com/eliassink/k3moduli/blob/main/magma/mordellweil}{magma/mordellweil} file, we perform the computations in the Mordell-Weil group $\mathrm{MW}(S)=\mathrm{MW}(\Sbar)$ needed for Sections~\ref{sec:ell} and \ref{sec:k3}. In particular, we compute the Weierstrass form for the generic fiber $E$ and show that the section $s$ from the proof of Proposition~\ref{prp:zariski} is not torsion. We also directly compute the Kodaira fibers, the torsion subgroup, and the height pairing $\langle s,s\rangle$ needed to prove Lemmas \ref{lem:picard} and \ref{lem:disc}. Finally, we check that our elliptic fibration is isomorphic to the elliptic fibration given in \cite{lecacheux}*{Table 3, row 2}.
\end{comp}
\begin{comp}\label{comp:other}
In the \href{https://github.com/eliassink/k3moduli/blob/main/magma/configurations/other}{magma/configurations/other} folder, we give construction sequences for the configurations which do not yield K3 surfaces. As described in the proof of Theorem \ref{thm:dense}, we find that the resulting equation is either trivial or reducible into rational components, and moreover that the realization space is contained in at most one such component. 
\end{comp}
\begin{comp}\label{comp:k3}
In the \href{https://github.com/eliassink/k3moduli/blob/main/magma/configurations/k3}{magma/configurations/k3} folder, we give construction sequences for $\L{VIII}$, $\L{IX}$, and $\L{X}$ and repeat Computations~\ref{comp:singularpoints} and \ref{comp:mordellweil} for these configurations. For $\L{IX}$, we check explicitly that $\widetilde{\Sfc{V}}$ and $\widetilde{\Sfc{IX}}$ have isomorphic elliptic fibrations. For $\L{X}$, we find that there are not enough sections to prove analytic density.
\end{comp}

\begin{comp}\label{comp:democratic}
In the \href{https://github.com/eliassink/k3moduli/blob/main/macaulay2/democratic}{macaulay2/democratic} file, we show that for every $\i$, $X_{\i}$ is irreducible and $X_{\i,s}$ is nonsingular (see Notation~\ref{ntn:cover}). More precisely, we show that the singular locus of $X_{\i}$ is contained in the unstable locus $X_{\i}\smallsetminus X_{\i,s}$. This latter computation is performed in affine charts and takes several hours per configuration to complete. 
\end{comp}
\end{document}